\begin{document}

\title{Stable automorphic forms for the general linear group  }

\author{Jae-Hyun Yang}

\address{Yang Institute for Advanced Study
\newline\indent
Hyundai 41 Tower, No. 1905
\newline\indent
293 Mokdongdong-ro, Yangcheon-gu
\newline\indent
Seoul 07997, Korea}

\address{Department of Mathematics
\newline\indent
Inha University
\newline\indent
Incheon 22212, Korea}

\email{jhyang@inha.ac.kr\ \ or\ \ jhyang8357@gmail.com}

\newtheorem{theorem}{Theorem}[section]
\newtheorem{lemma}{Lemma}[section]
\newtheorem{proposition}{Proposition}[section]
\newtheorem{remark}{Remark}[section]
\newtheorem{definition}{Definition}[section]

\renewcommand{\theequation}{\thesection.\arabic{equation}}
\renewcommand{\thetheorem}{\thesection.\arabic{theorem}}
\renewcommand{\thelemma}{\thesection.\arabic{lemma}}
\newcommand{\bbr}{\mathbb R}
\newcommand{\bbs}{\mathbb S}
\newcommand{\bn}{\bf n}
\newcommand\charf {\mbox{{\text 1}\kern-.24em {\text l}}}
\newcommand\fg{{\mathfrak g}}
\newcommand\fk{{\mathfrak k}}
\newcommand\fp{{\mathfrak p}}
\newcommand\g{\gamma}
\newcommand\G{\Gamma}
\newcommand\ka{\kappa}
\newcommand\al{\alpha}
\newcommand\be{\beta}
\newcommand\lrt{\longrightarrow}
\newcommand\s{\sigma}
\newcommand\ba{\backslash}
\newcommand\lmt{\longmapsto}
\newcommand\CP{{\mathscr P}_n}
\newcommand\CM{{\mathcal M}}
\newcommand\BC{\mathbb C}
\newcommand\BZ{\mathbb Z}
\newcommand\BR{\Bbb R}
\newcommand\BQ{\mathbb Q}
\newcommand\Rmn{{\mathbb R}^{(m,n)}}
\newcommand\PR{{\mathcal P}_n\times {\mathbb R}^{(m,n)}}
\newcommand\Gnm{GL_{n,m}}
\newcommand\Gnz{GL_{n,m}({\mathbb Z})}
\newcommand\Gjnm{Sp_{n,m}}
\newcommand\Gnml{GL(n,{\mathbb R})\ltimes {\mathbb R}^{(m,n)}}
\newcommand\Snm{SL_{n,m}}
\newcommand\Snz{SL_{n,m}({\mathbb Z})}
\newcommand\Snml{SL(n,{\mathbb R})\ltimes {\mathbb R}^{(m,n)}}
\newcommand\Snzl{SL(n,{\mathbb Z})\ltimes {\mathbb Z}^{(m,n)}}
\newcommand\la{\lambda}
\newcommand\GZ{GL(n,{\mathbb Z})\ltimes {\mathbb Z}^{(m,n)}}
\newcommand\DPR{{\mathbb D}(\PR)}
\newcommand\Rnn{{\mathbb R}^{(n,n)}}
\newcommand\Yd{{{\partial}\over {\partial Y}}}
\newcommand\Vd{{{\partial}\over {\partial V}}}
\newcommand\Ys{Y^{\ast}}
\newcommand\Vs{V^{\ast}}
\newcommand\DGR{{\mathbb D}(\Gnm)}
\newcommand\DKR{{\mathbb D}_K(\Gnm)}
\newcommand\DKS{{\mathbb D}_{K_0}(\Snm)}
\newcommand\fa{{\frak a}}
\newcommand\fac{{\frak a}_c^{\ast}}
\newcommand\SPR{S{\mathcal P}_n\times \Rmn}
\newcommand\DSPR{{\mathbb D}(\SPR)}
\newcommand\BD{{\mathbb D}}
\newcommand\SP{{\mathfrak P}_n}

\thanks{2010 Mathematics Subject Classification. Primary 11Fxx, 14Gxx.
\endgraf Keywords and phrases\,: polarized real tori, automorphic forms, stability of automorphic forms,\\
\indent the Jacobian real locus.}

\begin{abstract} In this paper, we introduce the notion of the stability of automorphic forms for the general linear group and relate the stability of automorphic forms to the moduli space of real tori and the Jacobian real locus.
\end{abstract}
\maketitle

\vskip 7mm

\centerline{\large \bf Table of Contents}

\vskip 0.75cm $ \quad\qquad\textsf{\large \ 1.
Introduction}$\vskip 0.021cm

$\quad\qquad \textsf{\large\ 2. Review on the geometry of $GL(n,\BR)/O(n,\BR)$}$
\vskip 0.0421cm

$ \quad\qquad  \textsf{\large\ 3. Polarized real tori }$
\vskip 0.0421cm

$ \quad\qquad \textsf{\large\ 4. The moduli of polarized real tori}$
\vskip 0.0421cm

$ \quad\qquad  \textsf{\large\ 5. Automorphic forms for $GL(n,\BR)$ }$
\vskip 0.0421cm

$ \quad\qquad
\textsf{\large\ 6. Stable automorphic forms for the general linear group}$
\vskip 0.0421cm

$ \quad\qquad\ \textsf{\large References }$

\vskip 10mm


\begin{section}{{\bf Introduction}}
\setcounter{equation}{0}

We let
$$\CP =\left\{\, Y\in \BR^{(n,n)}\,\vert\ Y=\,^tY>0\ \right\}$$ be
the open convex cone of positive definite symmetric real matrices of degree $n$
in the Euclidean space $\BR^{n(n+1)/2},$ where $F^{(k,l)}$ denotes
the set of all $k\times l$ matrices with entries in a commutative
ring $F$ for two positive integers $k$ and $l$ and $^t\!M$ denotes
the transpose of a matrix $M$. Then the general linear
group $GL(n,\BR)$ acts on $\CP$ transtively by
\begin{equation}
g\cdot Y=gY\,^tg,\ \ \ \ g\in GL(n,\BR),\ Y\in \CP.
\end{equation}
Therefore $\CP$ is a symmetric space which is diffeomorphic to the
quotient space $GL(n,\BR)/O(n,\BR)$, where $O(n,\BR)$ denotes the
real orthogonal group of degree $n$. Atle Selberg \cite{S1} investigated
differential operators on $\CP$ invariant under the action (1.1)
of $GL(n,\BR)$ (cf. \cite{M1, M2}). Using these invariant differential operators on $\CP$,
automorpic forms for $GL(n,\BR)$ were investigated thereafter (cf. \cite{B, G, Gr2, IT, T}).
The Siegel $\Phi$ operator plays an important role in the theory of Siegel modular forms
(cf.\, \cite{CS, F1, F2, M2}). Douglas Grenier \cite{Gr2} constructed an analogue of the
Siegel $\Phi$ operator called the {\sf Grenier operator} for automorphic forms for $GL(n,\BR)$. The Grenier operator is applied to study the Maass-Selberg relation for $GL(n,\BR)$.

\vskip 2mm
The goal of this article is to introduce the notion of stable automorphic forms for the general linear
group using the Grenier operator and relate the stability of automorphic forms to
the study of the moduli space of polarized real tori and the Jacobian real locus. This paper is organized as follows. In section 2,
we briefly review the geometry of the symmetric space $\CP=GL(n,\BR)/O(n,\BR)$ and spherical functions on
$\CP$. In section 3, we review some results on real polarized abelian varieties and then recall the notion of polarized real tori introduced by the author \cite{Y3}. In section 4, we roughly outline the moduli space of polarized real tori and the Jacobian real locus. In section 5, we review  the Fourier expansion of an automorphic form for $GL(n,\BR)$ and the Satake compactification of $GL(n,\BZ)\backslash \CP$ obtained by Grenier \cite{Gr1, Gr2, Gr3}. In the final section, we introduce the notion of stable automorphic forms for the general linear group using the Grenier operator and relate the stability of automorphic forms for $GL(\infty)$ to the study of the moduli space of polarized real tori and the Jacobian real locus. We also give an example of a stable automorphic form for $GL(\infty)$.
We prove that $(E_n(\alpha_n,Y)\,\vert\,n\geq 1)$ is a stable automorphic form
for $\Gamma_\infty$, where $E_n(\alpha_n,Y)$ denotes the Selberg Eisenstein series defined by the formula (5.5). See Theorem 6.2 for the precise statement.
This subject adds a new area to the theory of automorphic forms for the general linear group.

\vskip 0.31cm \noindent {\bf Notations:} \ \ We denote by
$\BQ,\,\BR$ and $\BC$ the field of rational numbers, the field of
real numbers and the field of complex numbers respectively. We
denote by $\BZ$ and $\BZ^+$ the ring of integers and the set of
all positive integers respectively. $\BR^{\times}$ (resp. $\BC^{\times}$)
denotes the group of nonzero real (resp. complex) numbers.
The symbol ``:='' means that
the expression on the right is the definition of that on the left.
For two positive integers $k$ and $l$, $F^{(k,l)}$ denotes the set
of all $k\times l$ matrices with entries in a commutative ring
$F$. For a square matrix $A\in F^{(k,k)}$ of degree $k$,
$Tr(A)$ denotes the trace of $A$. For any $M\in F^{(k,l)},\
^t\!M$ denotes the transpose of $M$. For a positive integer $n$, $I_n$
denotes the identity matrix of degree $n$.
For $A\in F^{(k,l)}$ and $B\in
F^{(k,k)}$, we set $B[A]=\,^tABA$ (Siegel's notation). For a complex matrix $A$,
${\overline A}$ denotes the complex {\it conjugate} of $A$.
${\rm diag}(a_1,\cdots,a_n)$ denotes the $n\times n$ diagonal matrix with diagonal entries
$a_1,\cdots,a_n$. For a smooth manifold, we denote by $C_c (X)$ (resp. $C_c^{\infty}(X)$ the algebra of all continuous (resp. infinitely differentiable) functions on $X$ with compact support.
\begin{equation*}
J_g=\begin{pmatrix} 0&I_g\\
                   -I_g&0\end{pmatrix}
\end{equation*}
denotes the symplectic matrix of degree $2g$.
\begin{equation*}
{\mathbb H}_g=\,\{\,\Omega\in \BC^{(g,g)}\,|\ \Omega=\,^t\Omega,\ \ \ \text{Im}\,\Omega>0\,\}
\end{equation*}
denotes the Siegel upper half plane of degree $g$.
\begin{equation*}
  Sp(g,\BR)=\{ M\in \BR^{(2g,2g)}\,|\ {}^tM J_gM=J_g\,\}
\end{equation*}
denotes the symplectic group of degree $g$ and
\begin{equation*}
  \G_g^{\flat}=\{ \gamma\in \BZ^{(2g,2g)}\,|\ {}^t\gamma J_g\gamma=J_g\,\}\subset Sp(g,\BR)
\end{equation*}
denotes the Siegel modular group of degree $g$.

\end{section}

\newcommand\POB{ {{\partial}\over {\partial{\overline \Omega}}} }
\newcommand\PZB{ {{\partial}\over {\partial{\overline Z}}} }
\newcommand\PX{ {{\partial}\over{\partial X}} }
\newcommand\PY{ {{\partial}\over {\partial Y}} }
\newcommand\PU{ {{\partial}\over{\partial U}} }
\newcommand\PV{ {{\partial}\over{\partial V}} }
\newcommand\PO{ {{\partial}\over{\partial \Omega}} }
\newcommand\PZ{ {{\partial}\over{\partial Z}} }
\newcommand\PW{ {{\partial}\over{\partial W}} }
\newcommand\PWB{ {{\partial}\over {\partial{\overline W}}} }
\newcommand\OVW{\overline W}
\newcommand\Rg{{\mathfrak R}_n}

\vskip 10mm
\begin{section}{{\bf Review on the geometry of $GL(n,\BR)/O(n,\BR)$ }}
\setcounter{equation}{0}

\vskip 0.3cm
For $Y=(y_{ij})\in \CP,$ we put
\begin{equation*}
dY=(dy_{ij})\qquad\text{and}\qquad \PY\,=\,\left(\, {
{1+\delta_{ij}}\over 2}\, { {\partial}\over {\partial y_{ij} } }
\,\right).
\end{equation*}

\vskip 0.2cm For a fixed element $A\in GL(n,\BR)$, we put
$$Y_*=A\cdot Y=AY\,^t\!A,\quad Y\in \CP.$$
Then
\begin{equation}
dY_*=A\,dY\,^t\!A \quad \textrm{and}\quad {{\partial}\over {\partial
Y_*}}=\,^t\!A^{-1} \Yd\, A^{-1}.
\end{equation}

\vskip 5mm
We can see easily that
\begin{equation*}  ds^2=\,Tr( (Y^{-1}dY)^2)  \end{equation*}
is a $GL(n,\BR)$-invariant Riemannian metric on $\CP$ and its
Laplacian is given by
\begin{equation*}
\Delta=   Tr\left( \left( Y\PY\right)^2\right),
\end{equation*}
\noindent where $ Tr(M)$ denotes the trace of a square
matrix $M$. We also can see that
\begin{equation}
d\mu_n(Y)=(\det Y)^{-{ {n+1}\over2 } }\prod_{i\leq j}dy_{ij}
\end{equation}
is a $GL(n,\BR)$-invariant volume element on $\CP$.

\begin{theorem}
A geodesic $\alpha (t)$ joining $I_n$ and $Y\in \CP$ has the form
\begin{equation*}
\alpha (t)=\exp (t A[V]),\qquad t\in [0,1],
\end{equation*}
where
\begin{equation*}
Y=(\exp A)[V]=\exp (A[V])=\exp (\,^tVAV)
\end{equation*}
is the spectral decomposition of $Y$, where $V\in O(n,\BR),\ A={\rm diag} (a_1,\cdots,a_n)$ with all $a_j\in \BR.$
The distance of $\alpha (t) \ (0\leq t\leq 1)$ between $I_n$ and $Y$ is
\begin{equation*}
 \left( \sum_{j=1}^{n} a_j^2 \right)^{\frac{1}{2}}.
\end{equation*}
\end{theorem}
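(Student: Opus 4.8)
The plan is to exploit the $GL(n,\BR)$-invariance of the metric $ds^2=\s((Y^{-1}dY)^2)$ to reduce to a diagonal curve, and then to verify the geodesic equation by a direct matrix computation. First I would record that, since $V\in O(n,\BR)$, the map $\phi_V(Y)={}^tVYV$ is exactly the action (1.1) of the element $g={}^tV\in GL(n,\BR)$ on $\CP$, hence an isometry; it fixes $I_n$ (because ${}^tVV=I_n$) and sends $\exp(A)$ to $\exp({}^tVAV)=(\exp A)[V]=Y$. Because an isometry carries geodesics to geodesics and preserves arclength, it suffices to treat the diagonal case $V=I_n$, where the candidate curve becomes $\beta(t)=\exp(tA)$ joining $I_n$ to $\exp(A)$.

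Next I would derive the geodesic equation for the energy functional $E(\alpha)=\int_0^1\s((Y^{-1}\dot Y)^2)\,dt$. Writing $P=Y^{-1}\dot Y$ and varying $Y$ inside $\CP$ by a symmetric matrix field $H$ vanishing at the endpoints, one uses $\delta(Y^{-1})=-Y^{-1}HY^{-1}$ together with the cyclic invariance of the trace and a single integration by parts. After the cancellation of the two resulting quadratic terms this collapses to the clean equation $\frac{d}{dt}(Y^{-1}\dot Y)=0$, so that $Y^{-1}\dot Y$ is constant along a geodesic. I would then substitute $\beta(t)=\exp(tA)$: since $A$ commutes with $\exp(tA)$ one gets $\beta^{-1}\dot\beta=A$, a constant matrix, so $\beta$ solves the geodesic equation and has the prescribed endpoints. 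Transporting back by $\phi_V$ then shows that $\alpha(t)=\exp(t\,A[V])$ is the asserted geodesic.

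Finally, the length is immediate from the reduced picture: along $\beta$ one has $\beta^{-1}\dot\beta=A={\rm diag}(a_1,\cdots,a_n)$, so $\s((\beta^{-1}\dot\beta)^2)=\s(A^2)=\sum_{j=1}^n a_j^2$ is constant in $t$; hence the arclength equals $\int_0^1\big(\s((\beta^{-1}\dot\beta)^2)\big)^{1/2}\,dt=\big(\sum_{j=1}^n a_j^2\big)^{1/2}$, and since $\phi_V$ is an isometry the length of $\alpha$ is the same.

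The step I expect to be the main obstacle is the matrix-calculus derivation of the geodesic equation: the variation produces several noncommuting terms, and one must symmetrize correctly (the admissible variations $H$ are symmetric) and check that the quadratic contributions cancel, so that everything reduces to $\frac{d}{dt}(Y^{-1}\dot Y)=0$. Uniqueness of the connecting geodesic, which justifies speaking of \emph{the} geodesic of this form, would follow from the fact that $\CP$ is a simply connected manifold of nonpositive sectional curvature; I would invoke this rather than reprove it. As an alternative to the variational argument one could instead cite the general principle that in the symmetric space $\CP=GL(n,\BR)/O(n,\BR)$ the geodesics through $I_n$ are precisely the one-parameter subgroups $\exp(tX)$ with $X$ symmetric, but the direct verification above is more self-contained.
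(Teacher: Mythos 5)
Your proof is correct, but note that the paper itself contains no argument at all: its ``proof'' of this theorem is a citation to Terras \cite[pp.\,16--17]{T}, and the argument given there is essentially the one you propose (reduction to the identity coset by the isometric $GL(n,\BR)$-action, Euler--Lagrange equations for the energy functional, verification that one-parameter subgroups solve them). So you have in effect reconstructed the cited proof rather than found a different route. The step you flagged as the main obstacle does go through cleanly: writing $P=Y^{-1}\dot Y$ and varying $Y\mapsto Y+\epsilon H$ with $H$ symmetric and vanishing at the endpoints, one gets $\delta E=2\int_0^1\bigl[-\s(P^2Y^{-1}H)+\s(PY^{-1}\dot H)\bigr]\,dt$; integrating the second term by parts and using $\tfrac{d}{dt}(PY^{-1})=\dot PY^{-1}-P^2Y^{-1}$ makes the two quadratic terms cancel, leaving $\delta E=-2\int_0^1\s(\dot PY^{-1}H)\,dt$. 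Moreover $\dot PY^{-1}=Y^{-1}\ddot YY^{-1}-Y^{-1}\dot YY^{-1}\dot YY^{-1}$ is automatically symmetric, so vanishing against all symmetric $H$ forces $\dot PY^{-1}=0$, i.e.\ $\tfrac{d}{dt}(Y^{-1}\dot Y)=0$; no delicate symmetrization is actually needed. The remaining pieces are also right: $\phi_V(Y)={}^tVYV$ is the action of ${}^tV$, hence an isometry fixing $I_n$ and carrying $\exp(tA)$ to $\exp(tA[V])$; $\beta(t)=\exp(tA)$ satisfies $\beta^{-1}\dot\beta=A$, a constant; and since critical points of the energy are constant-speed geodesics, the arclength is $\int_0^1\s(A^2)^{1/2}\,dt=\bigl(\sum_{j=1}^n a_j^2\bigr)^{1/2}$, consistent with your computation. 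Citing Cartan--Hadamard (nonpositive curvature, simple connectivity) for uniqueness of the connecting geodesic is a reasonable external appeal at the level of rigor of this paper.
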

\begin{proof} The proof can be found in \cite[pp.\,16-17]{T}.\end{proof}

\vskip 2mm
We consider the following differential operators
\begin{equation}
D_k= Tr\left( \left( Y\Yd \right)^k\right),\quad
k=1,2,\cdots,n,
\end{equation}
By Formula (2.1), we get
\begin{equation*}
\left( Y_* {{\partial}\over {\partial Y_*}}\right)^i=\,A\,\left(
Y\Yd\right)^i A^{-1}
\end{equation*}

\noindent for any $A\in GL(n,\BR)$. So each $D_i\ (1\leq i \leq n)$ is invariant
under the action (1.1) of $GL(n,\BR)$.

\vskip 0.2cm Selberg \cite{S1} proved the following.

\begin{theorem}
The algebra ${\mathbb D}(\CP)$ of all $GL(n,\BR)$-invariant differential operators on
$\CP$ is generated by $D_1,D_2,\cdots,D_n.$ Furthermore $D_1,D_2,\cdots,D_n$ are algebraically independent and ${\mathbb D}(\CP)$ is isomorphic to the commutative ring $\BC[x_1,x_2,\cdots,x_n]$ with $n$ indeterminates $x_1,x_2,\cdots,x_n.$
\end{theorem}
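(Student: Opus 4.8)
The plan is to analyze ${\mathbb D}(\CP)$ through its order filtration, reducing the whole question to classical invariant theory for the orthogonal group. I would filter ${\mathbb D}(\CP)$ by the order of a differential operator and pass to the associated graded algebra $\mathrm{gr}\,{\mathbb D}(\CP)$. The principal symbol of an order-$m$ operator $D\in{\mathbb D}(\CP)$ is a $GL(n,\BR)$-invariant function on the cotangent bundle $T^{*}\CP$, homogeneous of degree $m$ in the fibre. Since $GL(n,\BR)$ acts transitively on $\CP$ with isotropy $O(n,\BR)$ at the base point $I_n$, such a symbol is determined by its restriction to the fibre $T^{*}_{I_n}\CP$, which I identify with the space $\fp$ of symmetric matrices $\eta$ via the metric of Section 2; invariance under $GL(n,\BR)$ then becomes invariance under the conjugation action $\eta\mapsto k\,\eta\,{}^{t}\!k$ of $O(n,\BR)$. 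Thus the symbol map embeds $\mathrm{gr}\,{\mathbb D}(\CP)$ into the algebra $\BC[\fp]^{O(n,\BR)}$ of $O(n,\BR)$-invariant polynomials on symmetric matrices, and this embedding is injective because a nonzero operator has a nonzero leading symbol.

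First I would compute the symbols of the $D_k$. Replacing $\Yd$ by the cotangent variable $\eta$ in $D_k=\s((Y\Yd)^k)$ gives the principal symbol $\s((Y\eta)^k)$, which at $Y=I_n$ is the power trace $p_k(\eta)=\s(\eta^k)$. By the spectral theorem every symmetric matrix is $O(n,\BR)$-conjugate to a diagonal one, so every $O(n,\BR)$-invariant polynomial on $\fp$ is a symmetric function of the eigenvalues; by the fundamental theorem of symmetric functions together with Cayley--Hamilton (which expresses $\s(\eta^k)$ for $k>n$ in terms of the first $n$ power traces), the elements $p_1,\dots,p_n$ generate $\BC[\fp]^{O(n,\BR)}$ freely and are algebraically independent. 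Hence the symbol map is surjective and $\mathrm{gr}\,{\mathbb D}(\CP)\cong\BC[p_1,\dots,p_n]$ is a free polynomial algebra on the $n$ symbols of $D_1,\dots,D_n$.

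Next I would bootstrap from symbols to the operators by induction on the order $m$. Given $D\in{\mathbb D}(\CP)$ of order $m$, its symbol is a weighted-homogeneous polynomial $P$ in $p_1,\dots,p_n$ (assigning weight $k$ to $p_k$); then $D-P(D_1,\dots,D_n)$ is invariant of order $<m$, and the induction closes with the order-$0$ case being the constants. This shows $D_1,\dots,D_n$ generate ${\mathbb D}(\CP)$, and algebraic independence is inherited from the symbols, since a nontrivial polynomial relation among the $D_k$ would produce, on passing to top-order symbols, a relation among the algebraically independent $p_k$. It remains to record that ${\mathbb D}(\CP)$ is commutative; this is the classical fact that ${\mathbb D}(G/K)$ is abelian for any Riemannian symmetric pair $(G,K)$, established via the geodesic symmetry at the base point (cf. Helgason). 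Combining these facts identifies ${\mathbb D}(\CP)$ with the commutative polynomial ring $\BC[x_1,x_2,\dots,x_n]$ through $x_k\mapsto D_k$.

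The main obstacle is precisely the passage from the graded algebra, where everything reduces to transparent $O(n,\BR)$-invariant theory, to the filtered algebra ${\mathbb D}(\CP)$ itself: generation and algebraic independence follow from the order induction above, but the commutativity that pins down the clean isomorphism type genuinely uses the symmetric-space hypothesis rather than mere homogeneity. An alternative, entirely structural route is to invoke the Harish-Chandra--Helgason isomorphism ${\mathbb D}(GL(n,\BR)/O(n,\BR))\cong S(\fa_{\BC})^{W}$, with $\fa$ the diagonal Cartan subspace ($\fa\cong\BR^{n}$) and Weyl group $W=S_n$ acting by permutations, so that $S(\fa_{\BC})^{W}=\BC[x_1,\dots,x_n]^{S_n}$; the fundamental theorem of symmetric functions again yields a free polynomial algebra on $n$ generators, and one checks that the image of $D_k$ is the $k$-th power sum modulo lower-order terms. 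The delicate point in this second approach is the explicit evaluation of that image, namely the computation of the radial part of the matrix operator $\s((Y\Yd)^k)$ along $\fa$.
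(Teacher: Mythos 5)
Your proposal is essentially correct, but it takes a genuinely different route from the one the paper relies on. The paper gives no argument of its own for this theorem: it simply cites Maass \cite[pp.\,64-66]{M2}, where the result is proved by the power-function (eigenvalue-homomorphism) method --- each invariant operator $D$ is sent to the polynomial $\lambda_D$ determined by $Dp_s=\lambda_D(s)\,p_s$, this map is shown to be an injective algebra homomorphism whose image is (after a shift of variables) the ring of symmetric polynomials, and the $D_j$ are shown to map to power sums plus lower-degree terms. That is precisely the Harish-Chandra--Helgason mechanism you sketch only as your \emph{alternative} route, and it is the one reflected in the surrounding text of the paper (Theorem 2.3 records the eigenvalue polynomials $\lambda_j(z)$, and Proposition 2.1(3) the eigenfunction property of $p_s$). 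Your primary argument instead filters ${\mathbb D}(\CP)$ by order and identifies the associated graded algebra, via the principal symbol at the base point $I_n$, with the $O(n,\BR)$-invariant polynomials on the symmetric matrices $\fp$; the spectral theorem (Chevalley restriction) shows these are freely generated by the power traces $\s(\eta^k)$, $1\leq k\leq n$, which are exactly the symbols of $D_1,\dots,D_n$, and generation and algebraic independence then descend to the filtered algebra by your induction on order (the key point, which you do state, being that a degree-$m$ homogeneous invariant symbol is a weight-$m$ polynomial in the $\s(\eta^k)$, so that $D-P(D_1,\dots,D_n)$ genuinely drops order). What each approach buys: yours makes generation and independence transparent and nearly elementary, at the cost of importing commutativity of ${\mathbb D}(G/K)$ for a Riemannian symmetric pair as a separate ingredient (via the geodesic symmetry, as you note --- without it the symbol argument alone cannot pin down the isomorphism with $\BC[x_1,\dots,x_n]$); the Selberg--Maass route delivers commutativity, generation, and the isomorphism type in a single stroke, and moreover produces the explicit eigenvalue polynomials that the rest of the paper actually uses. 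Both arguments are sound; yours is a legitimate, more geometric substitute for the cited proof.
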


\begin{proof} The proof can be found in \cite[pp.\,64-66]{M2}.\end{proof}

\vskip 3mm
For $s=(s_1,\cdots,s_n)\in \BC^n$, Atle Selberg \cite[pp.\,57-58]{S1} introduced the power function
$p_s:\CP\lrt \BC$ defined by
\begin{equation}
  p_s (Y):=\prod_{j=1}^{n} (\det Y_j)^{s_j},\quad Y\in \CP,
\end{equation}
where $Y_j\in \mathscr P_j\ (1\leq j\leq n)$ is the $j\times j$ upper left corner of $Y$.
Let
\begin{equation}
T_n:=\left\{\, t=
\begin{pmatrix}
           t_{11} & t_{12} & \cdots & t_{1n} \\
           0 & t_{22} & \cdots & t_{2n}\\
           0 & 0 & \ddots &  \vdots\\
           0 & 0 &   0  & t_{nn}
         \end{pmatrix}\in GL(n,\BR)\,\Big|\ t_{jj}>0,\ 1\leq j\leq n \ \right\}
\end{equation}
be the subgroup of $GL(n,\BR)$ consisting of upper triangular matrices. For $r=(r_1,\cdots,r_n)\in \BC^n$,
we define the group homomorphism $\tau_r : T_n\lrt \BC^{\times}$ by
\begin{equation}
  \tau_r (t):=\prod_{j=1}^{n} t_{jj}^{r_j},\quad t=(t_{ij})\in T_n.
\end{equation}
For $z=(z_1,\cdots,z_n)\in \BC^n$, we define the function $\phi_z: T_n\lrt \BC^{\times}$ by
\begin{equation}
  \phi_z (t):= \prod_{j=1}^{n} t_{jj}^{2 z_j+j-\frac{n+1}{2}},\quad t=(t_{ij})\in T_n.
\end{equation}
We note that $\phi_z (t)=p_s (I_n[t])$ for some $s\in\BC^n$.

\begin{proposition}
(1) For $s=(s_1,\cdots,s_n)\in \BC^n$, we put $r_j=2(s_j+\cdots + s_n),\ j=1,\cdots, n.$
Then we have
\begin{equation*}
  p_s (I_n [t])=\tau_r (t),\quad t\in T_n.
\end{equation*}
\vskip2mm
\noindent
(2) $p_s (Y[t])=p_s(I_n[t])\,p_s(Y)$ for any $Y\in \CP$ and $t\in T_n$.
\vskip 2mm
\noindent
(3) For any $D\in \BD (\CP)$, we have $Dp_s=Dp_s (I_n)\,p_s,$ i.e., $p_s$ is a common eigenfunction
of $\BD (\CP)$.
\end{proposition}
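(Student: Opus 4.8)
The plan is to treat the three parts in order, the first two being direct computations with leading principal minors and the third being the conceptual heart of the statement. The common observation underlying (1) and (2) is that for an upper triangular $t=(t_{ij})\in T_n$ the leading principal $j\times j$ block behaves well under the substitution $Y\mapsto Y[t]=\,^t t\,Y\,t$. Writing $t$ and $Y$ in $j+(n-j)$ block form and using that the lower-left block of $t$ vanishes, a short block multiplication shows that the $j\times j$ upper-left corner of $Y[t]$ is exactly $Y_j[t_j]$, where $t_j$ and $Y_j$ are the corresponding upper-left corners. Since $t_j$ is again upper triangular with diagonal $t_{11},\dots,t_{jj}$, taking determinants gives $\det (Y[t])_j=(\det t_j)^2\det Y_j=\big(\prod_{l=1}^{j}t_{ll}^{2}\big)\det Y_j$.

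For part (1) I would specialize this to $Y=I_n$, so that $\det(I_n[t])_j=\prod_{l=1}^{j}t_{ll}^{2}$ and hence $p_s(I_n[t])=\prod_{j=1}^{n}\big(\prod_{l=1}^{j}t_{ll}^{2}\big)^{s_j}$. Collecting the exponent of each fixed diagonal entry $t_{ll}$ and switching the order of the double product, the exponent of $t_{ll}$ becomes $\sum_{j\ge l}2s_j=2(s_l+\cdots+s_n)=r_l$, which is precisely the definition of $r$; this yields $p_s(I_n[t])=\prod_{l=1}^{n}t_{ll}^{r_l}=\tau_r(t)$. Part (2) then follows immediately by plugging the factorized determinant into the definition of $p_s$: the product $p_s(Y[t])=\prod_{j}\big((\det t_j)^2\det Y_j\big)^{s_j}$ splits as the product of $\prod_j(\det t_j)^{2s_j}=p_s(I_n[t])$ and $\prod_j(\det Y_j)^{s_j}=p_s(Y)$.

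For part (3) I would exploit the quasi-invariance just established. Fix $D\in\BD(\CP)$ and, for $t\in T_n$, let $\rho_t$ denote the map $Y\mapsto Y[t]$. Since $Y[t]=\,^t t\,Y\,t=(\,^t t)\cdot Y$ is the action (1.1) of the lower triangular matrix $\,^t t$, the invariance of $D$ gives $D(f\circ\rho_t)=(Df)\circ\rho_t$ for every smooth $f$. Applying this to $f=p_s$ and using part (2), which says $p_s\circ\rho_t=p_s(I_n[t])\,p_s$ with the scalar factor $p_s(I_n[t])$ independent of $Y$, I obtain $(Dp_s)\circ\rho_t=p_s(I_n[t])\,Dp_s$. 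Evaluating both sides at $Y=I_n$ yields $(Dp_s)(I_n[t])=p_s(I_n[t])\,(Dp_s)(I_n)$ for all $t\in T_n$.

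Finally I would invoke the Cholesky decomposition: every $Y\in\CP$ can be written as $Y=\,^t t\,t=I_n[t]$ for some $t\in T_n$, so the triangular orbit of $I_n$ fills out all of $\CP$. Substituting $Y=I_n[t]$ into the identity from the previous step turns it into $(Dp_s)(Y)=(Dp_s)(I_n)\,p_s(Y)$ for every $Y\in\CP$, which is exactly the assertion that $p_s$ is a joint eigenfunction of $\BD(\CP)$ with eigenvalue $Dp_s(I_n)$. I expect part (3) to be the main obstacle, not because any individual step is hard but because it requires the right packaging: recognizing $\rho_t$ as the action (1.1) of $\,^t t$ so that the invariance of $D$ applies, reading part (2) as genuine quasi-invariance of $p_s$ under the triangular group, and using transitivity of that group on $\CP$ to propagate the eigenvalue equation from the single base point $I_n$ to all of $\CP$.
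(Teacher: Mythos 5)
Your proof is correct and complete. The paper itself gives no argument for this proposition, deferring entirely to \cite[pp.\,39-40]{T}, and your route---the block computation showing $(Y[t])_j=Y_j[t_j]$ for upper triangular $t$, the resulting quasi-invariance $p_s(Y[t])=p_s(I_n[t])\,p_s(Y)$, and then the combination of $GL(n,\BR)$-invariance of $D$ with the transitivity of $T_n$ on $\CP$ via the Cholesky decomposition $Y=I_n[t]$---is essentially the same standard argument found in that reference.
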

\begin{proof} The proof can be found in \cite[pp.\,39-40]{T}.\end{proof}

\vskip 3mm
Hans Maass \cite{M2} proved the following theorem.
\begin{theorem}
(1) Let $D_1,\cdots,D_n\in  \BD (\CP)$ be algebraically independent invariant differential operators given by Formula (2.3). Then
\begin{equation*}
  D_j \phi_z = \lambda_j (z) \phi_z, \quad 1\leq j\leq n.
\end{equation*}
where $\lambda_j (z)$ is a symmetric polynomial in $z_1,\cdots, z_n$ of degree $j$ and having the following form:
\begin{equation*}
\lambda_j (z_1,\cdots, z_n)=z_1^j+\cdots+z_n^j + {\rm terms\ of\ lower\ degree}.
\end{equation*}
\noindent
(2) The effect of $D\in \BD (\CP)$ on power functions $p_s (Y)$ determines $D$ uniquely.
\end{theorem}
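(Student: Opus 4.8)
The plan is to reduce the whole statement to a computation of eigenvalues, and then to the leading term of those eigenvalues. By Proposition 2.3(1) the remark $\phi_z(t)=p_s(I_n[t])$ is pinned down by the linear dictionary $2(s_j+\cdots+s_n)=2z_j+j-\frac{n+1}{2}$, so, writing every $Y\in\CP$ uniquely as $Y=I_n[t]={}^t t\,t$ with $t\in T_n$, the function $\phi_z$ viewed on $\CP$ is exactly the power function $p_s$ for this $s=s(z)$. Proposition 2.3(3) then gives at once that $\phi_z$ is a joint eigenfunction, $D_j\phi_z=\lambda_j(z)\phi_z$ with $\lambda_j(z)=(D_j\phi_z)(I_n)$ (note $\phi_z(I_n)=1$). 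Moreover, because $\BD(\CP)$ is commutative and $\phi_z$ is a simultaneous eigenfunction, $D\mapsto\lambda_D(z)$ is an algebra homomorphism $\gamma\colon\BD(\CP)\lrt\BC[z_1,\ldots,z_n]$ once we know each $\lambda_D$ is polynomial; establishing this, together with the degree and the leading term, is the first real task.

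For the degree and leading form I would pass to the diagonal torus. Writing $Y=\mathrm{diag}(y_1,\ldots,y_n)$ and $v_i=\log y_i$, the matrix operator $Y\Yd$ has diagonal (torus) part $\mathrm{diag}(\partial/\partial v_1,\ldots,\partial/\partial v_n)$, so the principal part of $D_j=\s((Y\Yd)^j)$ along the torus is $\sum_i(\partial/\partial v_i)^j$, the off-diagonal (unipotent) entries contributing only terms of lower degree in $z$. On the diagonal $\phi_z$ restricts to $\prod_i y_i^{\mu_i}=\exp(\sum_i\mu_i v_i)$ with $\mu_i=z_i+\frac{i}{2}-\frac{n+1}{4}$, whence the top-degree-in-$z$ part of $\lambda_j(z)$ is $\sum_i\mu_i^j=\sum_i z_i^j+(\text{lower degree in }z)$. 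As a normalization check, $D_1$ is exactly the Euler operator $\sum_{i\le k}y_{ik}\,\partial/\partial y_{ik}$, and the shifts $\frac{i}{2}-\frac{n+1}{4}$ cancel in the sum to give $\lambda_1(z)=z_1+\cdots+z_n$. This shows $\lambda_j$ is a polynomial of degree $j$ whose leading form is the $j$-th power sum $P_j:=z_1^j+\cdots+z_n^j$. The point to be careful about is that the leading term really is the diagonal contribution; making this radial-part reduction rigorous (that the unipotent directions drop in top degree) is the main computational obstacle, and is where I would lean on the explicit radial parts of Maass.

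The remaining assertion of (1), that $\lambda_j$ is symmetric in $z_1,\ldots,z_n$, is the subtlest point and does not follow from the leading-term computation alone. Here I would invoke the Weyl-group ($W=S_n$) functional equations for the power functions: for a permutation $\sigma$ there is a standard intertwining operator carrying $\phi_z$ to a multiple of $\phi_{\sigma z}$ and commuting with every element of $\BD(\CP)$, which forces $\lambda_j(\sigma z)=\lambda_j(z)$. Equivalently, this is the Harish-Chandra statement that the image of $\gamma$ lies in $\BC[z]^{S_n}$; since by Theorem 2.2 the source $\BD(\CP)=\BC[D_1,\ldots,D_n]$ and the ring $\BC[z]^{S_n}$ are both polynomial rings on $n$ generators, and $\gamma(D_j)=\lambda_j=P_j+(\text{lower})$ generate the latter, the symmetry of each $\lambda_j$ follows. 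Proving the intertwining relation (or otherwise verifying $W$-invariance directly) is the part I expect to require genuine work rather than bookkeeping.

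Finally, statement (2) follows cleanly from the leading-term analysis. Since $D\,p_s=\lambda_D(s)\,p_s$ by Proposition 2.3(3) and part (1), the effect of $D$ on all power functions is recorded precisely by the polynomial $\lambda_D$, so it suffices to prove the homomorphism $\gamma\colon D\mapsto\lambda_D$ is injective. Any nontrivial polynomial relation among $\lambda_1,\ldots,\lambda_n$ would, upon taking top-degree parts, produce the same relation among their leading forms $P_1,\ldots,P_n$; but the power sums $P_1,\ldots,P_n$ are algebraically independent in $\BC[z_1,\ldots,z_n]$, so no such relation exists and $\lambda_1,\ldots,\lambda_n$ are algebraically independent. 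As $\BD(\CP)$ is the free polynomial algebra on $D_1,\ldots,D_n$ by Theorem 2.2, injectivity of $\gamma$ is immediate, and hence the action of $D$ on power functions determines $D$ uniquely.
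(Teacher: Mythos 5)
The paper offers no proof of this theorem at all: its ``proof'' is the single sentence citing \cite[pp.\,70--76]{M2} and \cite[pp.\,44--48]{T}. So the real comparison is between your outline and the cited arguments of Maass and Terras. Your route is the Harish--Chandra--homomorphism one: identify $\phi_z$ with $p_{s(z)}$ via Proposition 2.3 (your dictionary $2(s_j+\cdots+s_n)=2z_j+j-\frac{n+1}{2}$ and the diagonal exponents $\mu_i=z_i+\frac{i}{2}-\frac{n+1}{4}$ are both correct), observe that $D\mapsto\lambda_D$ is an algebra homomorphism into polynomials in $z$, extract the leading term from the radial part along the diagonal torus, get the $S_n$-symmetry from intertwining functional equations, and deduce part (2) from the algebraic independence of the power sums together with Selberg's Theorem 2.2. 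Maass and Terras instead compute: they set up a recursion for $(Y\Yd)^k p_s$ as $p_s$ times an explicit matrix of polynomials in $s$, take traces, and read degree, leading term and symmetry off the resulting formulas. Your approach is more conceptual and generalizes beyond $GL(n)$; and your treatment of part (2) --- a nonzero polynomial relation among $\lambda_1,\dots,\lambda_n$ would force, by passing to leading forms, a relation among the power sums $P_1,\dots,P_n$, contradicting their algebraic independence, so $D\mapsto\lambda_D$ is injective on $\BC[D_1,\dots,D_n]=\BD(\CP)$ --- is complete, correct, and cleaner than anything the paper points to.

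That said, as written your proposal is an outline rather than a proof, and you say so yourself: the two steps you defer are precisely the mathematical content of the pages the paper cites. (i) The claim that the unipotent directions contribute only lower-degree terms, so that the radial part of $D_j$ is $\sum_i(\partial/\partial v_i)^j$ modulo lower order, is correct, but your justification is a symbol-level heuristic (restriction of the $O(n,\BR)$-invariant symbol $\s(\xi^j)$ to diagonal $\xi$); promoting it to a statement about the operators acting on $N$-invariant functions is exactly the computation Maass carries out. (ii) The $S_n$-invariance of the $\lambda_j$ is a genuine theorem, not bookkeeping; the intertwining-operator argument you invoke would itself have to be constructed (with its analytic continuation), or replaced by Selberg's spherical-function theory or by Maass's explicit formulas. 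Neither gap is a wrong idea or a step that would fail --- both are the standard way to finish --- but until they are filled the proposal rests on citations for its analytic core, which leaves it at essentially the same level of completeness as the paper's own one-line treatment.
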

\begin{proof} The proof can be found in \cite[pp.\,70-76]{M2} or \cite[pp.\,44-48]{T}.\end{proof}

\vskip 3mm
A function $h:\CP\lrt \BC$ is said to be $\textsf{spherical}$ if $h$ satisfies the following properties
(2.8)-(2.10):
\begin{equation}
  h(Y[k])=h(\,^tk Yk)=h(Y)\quad {\rm for\ all}\ Y\in \CP\ {\rm and}\ k\in O(n,\BR).
\end{equation}
\begin{equation}
h\ {\rm is\ a\ common\ eigenfunction\ of} \ \BD (\CP).
\end{equation}
\begin{equation}
h(I_n)=1.
\end{equation}

\vskip 2mm
For the present, we put $G=GL(n,\BR)$ and $K=O(n,\BR)$. For $s=(s_1,\cdots,s_n)\in \BC^n$, we define the function
\begin{equation}
h_s (Y):=\int_{K} p_s(Y[k])\,dk, \quad Y\in \CP,
\end{equation}
where $dk$ is a normalized measure on $K$ so that $\int_K dk =1.$ It is easily seen that $h_s (Y)$ is a spherical function on $\CP.$ Selberg \cite[pp.\,53-57]{S1} proved that these $h_s (Y)$ are the only spherical functions on $\CP.$ If $f\in C_c (\CP)$, the $\textsf{Helgason-Fourier\ transform}$ of $f$ is defined to be the function ${\mathscr H}f:\BC^n\times K\lrt \BC$\,:
\begin{equation}
{\mathscr H}f (s,k):=\int_{\CP} f(Y)\,\overline{p_s(Y[k])}\,d\mu_n (Y), \quad (s,k)\in \BC^n\times K,
\end{equation}
where $p_s$ is the Selberg power function (see Formula (2.4)) and $d\mu_n (Y)$ is a $GL(n,\BR)$-invariant volume element on $\CP$ (see Formula (2.2)).

\begin{proposition}
(1) The spherical function $h$ on $\CP$ corresponding to the eigenvalues $\lambda_1,\cdots,\lambda_n\in \BC$ with
\begin{equation*}
D_i h= \lambda_i h,\quad 1\leq i\leq n
\end{equation*}
is unique. Here $D_1,\cdots,D_n$ are invariant differential operators on $\CP$ defined by Formula (2.3).
\vskip 2mm \noindent
(2) Let $f\in C^c (\CP)$ be a common eigenfunction of $\BD (\CP)$, i.e., $Df=\lambda_D f \,(\lambda_D\in \BC)$ for all $D\in \BD (\CP).$ Define $s\in \BC^n$ by
\begin{equation*}
Dp_s=\lambda_D p_s, \quad D\in \BD (\CP).
\end{equation*}
If $g\in C^{\infty}_c (K\backslash G/K)$ is a $K$-bi-invariant function on $G$ satisfying the condition $g(x)=g(x^{-1})$ for all $x\in G$, then
\begin{equation*}
f \star g = {\hat g}({\bar s}) f,
\end{equation*}
where $\star$ denotes the convolution operator and
\begin{equation*}
{\hat g}({\bar s}):=\int_{\CP} g(Y)\,\overline{p_{\bar s}(Y[k])}\,d\mu_n (Y).
\end{equation*}
Conversely, suppose that $f\in C^{\infty}(\CP)$ is a $K$-invariant eigenfunction of all convolution operators with $g\in C^{\infty}_c (\CP).$ Then $f$ is a common eigenfunction of $\BD (\CP)$.
\end{proposition}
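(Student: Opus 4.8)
The plan is to handle the two parts in turn, using part (1)---the uniqueness of a spherical function with prescribed eigenvalues---as the engine for part (2). Put $G=GL(n,\BR)$ and $K=O(n,\BR)$, and realise functions on $\CP$ as right $K$-invariant functions on $G$; a spherical function is then $K$-bi-invariant.

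For part (1), I would reduce to the radial picture. By the Cartan decomposition $G=KAK$, with $A=\{\,\mathrm{diag}(a_1,\dots,a_n)\,\}$ the diagonal subgroup, a $K$-bi-invariant function is determined by its restriction to $A$, and that restriction is invariant under the Weyl group $W=S_n$. The eigenvalue equations $D_i h=\lambda_i h$ $(1\le i\le n)$ then pass to a holonomic system of differential equations for the radial part on $A\cong\BR^n$. By Theorems 2.2 and 2.3 the operators $D_1,\dots,D_n$ generate $\BD(\CP)$ and their radial parts have leading terms the power sums, so the system has a regular singular point at the identity coset, and its space of $W$-invariant solutions extending smoothly to $\CP$ is one-dimensional; the normalization $h(I_n)=1$ then selects the unique solution. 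Existence is not in question, since $h_s(Y)=\int_K p_s(Y[k])\,dk$ is spherical and, by Proposition 2.1(3) and Theorem 2.3, carries the eigenvalues of $p_s$; the assertion of part (1) is precisely the injectivity of the eigenvalue map. I expect the one-dimensionality of the regular solution space to be the real obstacle: this is the heart of Harish-Chandra's theory of spherical functions and in a review may be quoted from \cite{T}.

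For the forward implication of part (2), I would first exploit the hypothesis $g(x)=g(x^{-1})$. Since $G$ is unimodular, the standard convolution can be rewritten as $(f\star g)(x)=\int_G f(xy)\,g(y)\,dy$, and inserting $\int_K dk=1$ and using the left $K$-invariance of $g$ yields
$$(f\star g)(x)=\int_G\Big(\int_K f(xky)\,dk\Big)\,g(y)\,dy.$$
The inner kernel $\Phi(x,y):=\int_K f(xky)\,dk$ is, as a function of $y$, $K$-bi-invariant, and since every $D\in\BD(\CP)$ is $G$-invariant it is a joint eigenfunction of $\BD(\CP)$ in $y$ with the same eigenvalues as $f$. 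By part (1) it is proportional to the spherical function $\varphi$ carrying those eigenvalues; evaluating at $y=I_n$ and using the right $K$-invariance of $f$ gives $\Phi(x,I_n)=f(x)$, hence $\Phi(x,y)=f(x)\varphi(y)$. Substituting back gives $f\star g=\big(\int_G\varphi(y)\,g(y)\,dy\big)f$, and since $\varphi=h_s$ and $\overline{p_{\bar s}}=p_s$ on $\CP$ the constant is exactly $\hat g(\bar s)$. The only delicate bookkeeping is the conjugation and the appearance of $\bar s$.

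For the converse, suppose $f\in C^\infty(\CP)$ is $K$-invariant and satisfies $f\star g=\chi(g)\,f$ for all admissible $g$; associativity of convolution makes $\chi$ multiplicative. The key identity is that a $G$-invariant operator commutes with right convolution in the form $D(f\star g)=f\star(Dg)$, and that $Dg$ stays $K$-bi-invariant when $g$ does. Hence $\chi(g)\,Df=D(f\star g)=f\star(Dg)=\chi(Dg)\,f$. Choosing $g$ from a $K$-bi-invariant approximate identity makes $\chi(g)\neq0$, and evaluating at a point where $f\neq0$ shows that $\chi(Dg)/\chi(g)$ is a constant $\lambda_D$; therefore $Df=\lambda_D f$, so $f$ is a joint eigenfunction of $\BD(\CP)$. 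The points needing care are that $Dg$ remains $K$-bi-invariant and that $\chi$ does not vanish identically, both secured by the approximate identity.
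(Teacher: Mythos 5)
The paper offers no argument of its own here: its ``proof'' of this proposition is a bare citation to Selberg \cite[pp.\,53--56]{S1} and Terras \cite[pp.\,67--69]{T}. What you have written is essentially the argument contained in those references --- uniqueness of the spherical function with a prescribed eigenvalue system, the functional-equation trick $\int_K f(xky)\,dk=f(x)\varphi(y)$ for the forward half of (2), and the relation $\chi(g)\,Df=\chi(Dg)\,f$ plus an approximate identity for the converse --- so your reconstruction is the ``same route'' in substance, just spelled out where the paper merely points. Two caveats are worth recording. First, the step ``by part (1), $\Phi(x,\cdot)$ is proportional to $\varphi$'' uses part (1) in a slightly stronger form than stated: part (1) only speaks of spherical functions, which are normalized to $1$ at $I_n$, so at points where $f(x)=0$ you cannot normalize $\Phi(x,\cdot)$ and conclude $\Phi(x,\cdot)=0$ directly. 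One needs either the statement that the space of $K$-bi-invariant joint eigenfunctions with a fixed eigenvalue system is at most one-dimensional, or a patch: joint eigenfunctions are real-analytic (the system contains the elliptic Laplacian), so $\{f\neq 0\}$ is dense when $f\not\equiv 0$, and the identity $\Phi(x,y)=f(x)\varphi(y)$ extends by continuity in $x$. Second, your treatment of part (1) defers the crucial one-dimensionality to the literature, which is fair given that the paper defers everything; but the phrase ``regular singular point at the identity coset'' is not how this is usually established --- the $KAK$ radial coordinates degenerate at $I_n$ (coincident eigenvalues), and the standard proofs proceed either by recursively determining the Taylor coefficients of a $K$-bi-invariant eigenfunction at $I_n$ from the eigenvalue equations, or via Selberg's classification of spherical functions as the $h_s$ of Formula (2.11), rather than by Frobenius-type regular-singular-point theory.
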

\begin{proof} The proof can be found in \cite[pp.\,53-56]{S1} or \cite[pp.\,67-69]{T}.\end{proof}

\vskip 3mm
The fundamental domain ${\mathfrak R}_n$ for $GL(n,\BZ)\ba \CP$ which was
found by H. Minkowski \cite{Mi} is defined as a subset of $\CP$
consisting of $Y=(y_{ij})\in \CP$ satisfying the following
conditions (M.1)--(M.2)\ (cf.\,\cite[p.\,123]{M2}):

\vskip 0.1cm
(M.1)\ \ \ $aY\,^ta\geq
y_{kk}$\ \ for every $a=(a_i)\in\BZ^n$ in which $a_k,\cdots,a_n$
are relatively prime for $k=1,2,\cdots,n$.

\vskip 0.1cm
(M.2)\ \ \
\ $y_{k,k+1}\geq 0$ \ for $k=1,\cdots,n-1.$

\vskip 0.1cm
We say
that a point of $\Rg$ is {\it Minkowski reduced} or simply {\it
M}-{\it reduced}. $\Rg$ has the following properties (R1)-(R4):

\vskip 0.1cm
(R1) \ For any $Y\in\CP,$ there exist a matrix $A\in
GL(n,\BZ)$ and $R\in\Rg$ such that $Y=R[A]$\,(cf.\,\cite[p.\,139]{M2}). That is,
\begin{equation*}
  GL(n,\BZ)\circ \Rg=\CP.
\end{equation*}
\indent (R2)\ \ $\Rg$ is a convex cone through the origin bounded
by a finite number of hyperplanes. $\Rg$ is closed in $\CP$
(cf.\,\cite[p.\,139]{M2}).

\vskip 0.1cm
(R3) If $Y$ and $Y[A]$ lie in $\Rg$ for $A\in
GL(g,\BZ)$ with $A\neq \pm I_n,$ then $Y$ lies on the boundary
$\partial \Rg$ of $\Rg$. Moreover $\Rg\cap (\Rg [A])\neq
\emptyset$ for only finitely many $A\in GL(n,\BZ)$
(cf.\,\cite[p.\,139]{M2}).

\vskip 0.1cm (R4) If $Y=(y_{ij})$ is
an element of $\Rg$, then
\begin{equation*}
y_{11}\leq y_{22}\leq \cdots \leq y_{nn}\quad \text{and}\quad
|y_{ij}|<{\frac 12}y_{ii}\quad \text{for}\ 1\leq i< j\leq n.
\end{equation*}
\indent We refer to \cite[pp.\,123-124]{M2}.

\vskip 0.3cm
$\CP$ parameterizes principally polarized real tori of dimension $n$ (see Section 3). The arithmetic quotient $GL(n,\BZ)\backslash \CP$ is the moduli space of isomorphism classes of principally polarized
real tori of dimension $n$.
According to (R2) we see that $\Rg$ is a semi-algebraic set with real analytic structure.

\end{section}

\newcommand\FA{\mathfrak A}
\newcommand\FL{\mathfrak L}
\newcommand\FT{\mathfrak T}
\newcommand\BH{\mathbb H}
\newcommand\Om{\Omega}
\newcommand\La{\Lambda}

\vskip 10mm
\begin{section}{{\bf Polarized real tori }}
\setcounter{equation}{0}

\vskip 3mm
In this section, we recall the concept of polarized real tori (cf.\,\cite{Y3}).

We review basic notions and some results on real principally polarized abelian varieties
(cf.\,\cite{GoT, S-S, Si1, Si2, Si3}).

\begin{definition}
A pair $(\FA,S)$ is said to be a \textsf{real abelian variety} if $\FA$ is a complex abelian variety and $S$ is an
anti-holomorphic involution of $\FA$ leaving the origin of $\FA$ fixed. The set of all fixed points of $S$ is called the
{\it real point} of $(\FA,S)$ and denoted by $(\FA,S)(\BR)$ or simply $\FA(\BR)$. We call $S$ a
\textsf{real structure} on $\FA$.
\end{definition}

\begin{definition}
(1) A \textsf{polarization} on a complex abelian variety $\FA$ is defined to be the Chern class
$c_1(D)\in H^2(\FA,\BZ)$ of an ample divisor $D$ on $\FA$. We can identify $H^2(\FA,\BZ)$ with
$\bigwedge^2 H^1(\FA,\BZ)$. We write $\FA=V/L$, where $V$ is a finite dimensional complex vector space and $L$ is
a lattice in $V$. So a polarization on $\FA$ can be defined as an alternating form $E$ on $L\cong H_1(\FA,\BZ)$
satisfying the following conditions (E1) and (E2)\,:
\vskip 0.1cm \noindent
(E1) The Hermitian form $H:V\times V\lrt \BC$ defined by
\begin{equation}
H(u,v)=\,E(i\,u,v)\,+\,i\, E(u,v),\qquad u,v\in V
\end{equation}
\noindent is positive definite. Here $E$ can be extended $\BR$-linearly to an alternating form on $V$.
\vskip 0.1cm \noindent
(E2) $E(L\times L)\subset \BZ$, i.e., $E$ is integral valued on $L\times L.$
\vskip 0.2cm\noindent
(2) Let $(\FA,S)$ be a real abelian variety with a polarization $E$ of dimension $g$.
A polarization $E$ is said to be
\textsf{real} or $S$-\textsf{real} if
\begin{equation}
E(S_*(a),S_*(b))=\,-E(a,b),\qquad a,b\in H_1(\FA,\BZ).
\end{equation}
Here $S_*:H_1(\FA,\BZ) \lrt H_1(\FA,\BZ)$ is the map induced by a real structure $S$.
If a polarization $E$ is real, the triple $(\FA,E,S)$ is called a \textsf{real polarized abelian variety}.
A polarization $E$ on $\FA$ is said to be \textsf{principal} if for a suitable basis (i.e., a symplectic
basis) of $H_1(\FA,\BZ)\cong L$, it is represented by the symplectic matrix $J_g$ (cf.\,see Notations in the
introduction).
A real abelian variety $(\FA,S)$ with a principal polarization $E$ is called a
\textsf{real principally polarized abelian variety}.

\vskip 0.2cm\noindent
(3) Let $(\FA,E)$ be a principally polarized abelian variety of dimension $g$ and let
$\{ \alpha_i\,|\ 1\leq i\leq 2g\,\}$ be a symplectic basis of $H_1(\FA,\BZ)$. It is known that there is
a basis $\{ \omega_1,\cdots,\omega_g \}$ of the vector space $H^0(\FA,\Omega^1)$ of holomorphic 1-forms on $\FA$
such that
\begin{equation*}
\left(\, \int_{\alpha_j}\omega_i\,\right) =\,(\Omega,I_g)\qquad \textrm{for some}\ \Omega\in {\mathbb H}_g.
\end{equation*}
The $g\times 2g$ matrix $(\Omega,I_g)$ or simply $\Omega$ is called a \textsf{period matrix} for $(\FA,E).$
\end{definition}

\vskip 0.3cm The definition of a {\it real polarized abelian variety} is motivated by the following theorem.
\begin{theorem}
Let $(\FA,S)$ be a real abelian variety and let $E$ be a polarization on $\FA$. Then there exists an ample
$S$-invariant (or $S$-real) divisor with Chern class $E$ if and only if $E$ satisfies the condition (3.2).
\end{theorem}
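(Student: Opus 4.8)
The plan is to translate the statement about divisors into one about holomorphic line bundles and to compute the effect of the real structure on Appell--Humbert data. Write $\FA = V/L$ with $V$ a complex vector space and $L = H_1(\FA,\BZ)$ a lattice, and lift the anti-holomorphic involution $S$ to a conjugate-linear involution $\sigma$ of $V$ with $\sigma(L)=L$, so that $S_* = \sigma|_L$. By the Appell--Humbert theorem every holomorphic line bundle on $\FA$ is isomorphic to some $\mathcal L(H,\chi)$, where $H$ is a Hermitian form on $V$ with $\mathrm{Im}\,H$ integral on $L$ and $\chi:L\lrt U(1)$ is a semicharacter for $E:=\mathrm{Im}\,H$; one has $c_1(\mathcal L(H,\chi))=E$, and $\mathcal L(H,\chi)$ is ample exactly when $H$ is positive definite. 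First I would record the one computational input of the proof: since $S$ is anti-holomorphic, the holomorphic line bundle attached to the divisor $S^{-1}(D)$ is the \emph{conjugate} pullback $\overline{S^{*}\mathcal L(H,\chi)}$, and a direct check gives
\[
\overline{S^{*}\mathcal L(H,\chi)}\;\cong\;\mathcal L(H^{\sigma},\chi^{\sigma}),\qquad H^{\sigma}(u,v)=\overline{H(\sigma u,\sigma v)},\quad \chi^{\sigma}(\la)=\overline{\chi(\sigma \la)},
\]
so that its Chern class is $\mathrm{Im}\,H^{\sigma}=-\,E(S_*\,\cdot\,,S_*\,\cdot\,)$.

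For necessity, suppose an ample $S$-invariant divisor $D$ with $c_1(D)=E$ exists. Then $S^{-1}(D)=D$, hence $\mathcal L(H^{\sigma},\chi^{\sigma})\cong\mathcal O(D)\cong\mathcal L(H,\chi)$, and comparing first Chern classes gives $-E(S_*a,S_*b)=E(a,b)$ for all $a,b\in L$, which is exactly condition (3.2).

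For sufficiency, assume (3.2). Let $H$ be the positive definite Hermitian form with $\mathrm{Im}\,H=E$ supplied by (E1); then (3.2) forces $\mathrm{Im}\,H^{\sigma}=-E(S_*\cdot,S_*\cdot)=E=\mathrm{Im}\,H$, and since a Hermitian form is recovered from its imaginary part through (3.1) this yields $H^{\sigma}=H$. Thus $\overline{S^{*}\mathcal L(H,\chi)}\cong\mathcal L(H,\chi^{\sigma})$ carries the same positive form $H$, and the bundle will be $S$-real precisely when the semicharacter is chosen $\sigma$-invariantly, i.e. $\chi^{\sigma}=\chi$. This is the step I expect to be the main obstacle: the semicharacters for $E$ form a torsor under $\mathrm{Hom}(L,U(1))$, on which $\chi\mapsto\chi^{\sigma}$ acts as an affine involution (well defined exactly because of (3.2)), and one must exhibit a fixed point. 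I would establish this either by showing that the resulting obstruction class in the $\BZ/2\BZ$-cohomology of $\mathrm{Hom}(L,U(1))$ vanishes, or, more concretely, by prescribing the values of $\chi$ on a $\sigma$-adapted basis of $L$ so that $\chi^{\sigma}=\chi$ holds by construction. Once such a $\chi$ is fixed, $\mathcal L:=\mathcal L(H,\chi)$ is an ample $S$-real line bundle with $c_1=E$; the real structure then transports to a conjugate-linear involution of the nonzero space $H^{0}(\FA,\mathcal L)$, which consequently has a nonzero fixed (real) section $\theta$. Its divisor $D=(\theta)$ is effective, ample, satisfies $S^{-1}(D)=D$, and has $c_1(D)=E$, which completes the argument.
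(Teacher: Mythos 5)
Your strategy is sound, and in fact it is essentially the standard one: the paper itself gives no argument for this theorem, deferring entirely to Silhol \cite[Theorem 3.4]{Si2}, and Silhol's proof runs through exactly the Appell--Humbert analysis you set up. Your computational input $\overline{S^{*}\mathcal{L}(H,\chi)}\cong\mathcal{L}(H^{\sigma},\chi^{\sigma})$ with $\mathrm{Im}\,H^{\sigma}=-E(S_{*}\cdot,S_{*}\cdot)$ is correct, the necessity direction is complete as written, and the final step of sufficiency is routine: when $H^{\sigma}=H$ and $\chi^{\sigma}=\chi$, the map $\theta\mapsto\overline{\theta\circ\sigma}$ is a conjugate-linear involution of the nonzero space $H^{0}(\mathfrak{A},\mathcal{L}(H,\chi))$ of theta functions, so a nonzero real section exists and its divisor is ample, $S$-invariant, with Chern class $E$.

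However, the sufficiency direction has a genuine gap at precisely the step you flag and then defer: producing a semicharacter with $\chi^{\sigma}=\chi$. This is not a technicality to be outsourced --- it is the actual content of the theorem, since every other step is formal. Moreover, your first proposed route (``show the obstruction class vanishes'') cannot succeed by soft arguments: the semicharacters for $E$ form a torsor under $\mathrm{Hom}(L,U(1))$, and an involution of a torsor under an abelian group --- even under an elementary abelian $2$-group --- can perfectly well be fixed-point free, so any vanishing argument must use the specific interplay between $\sigma$ and $E$. Your second route is the one that works, but it requires a nontrivial lemma. For a principal polarization, Comessatti's normal form (the matrix $M_{\sigma}$ of (3.13) in the paper) gives a symplectic basis $\lambda_{1},\dots,\lambda_{g},\mu_{1},\dots,\mu_{g}$ of $L$ with $\sigma\mu_{j}=\mu_{j}$ and $\sigma\lambda_{i}=-\lambda_{i}+\sum_{j}(2X)_{ji}\mu_{j}$, where $2X$ is integral symmetric. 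A direct computation shows that the standard semicharacter $\chi_{0}\bigl(\sum_{i} a_{i}\lambda_{i}+\sum_{j} b_{j}\mu_{j}\bigr)=(-1)^{\sum_{i} a_{i}b_{i}}$ satisfies $\chi_{0}^{\sigma}=\chi_{0}\cdot\psi$ with $\psi\bigl(\sum_{i} a_{i}\lambda_{i}+\sum_{j} b_{j}\mu_{j}\bigr)=(-1)^{\sum_{i}(2X)_{ii}a_{i}}$, and that correcting by a character $\alpha(\mu_{j})=(-1)^{e_{j}}$, $\alpha(\lambda_{i})=1$ reduces the fixed-point equation $\chi^{\sigma}=\chi$ to the linear system $Ae\equiv d\pmod 2$, where $A=2X\bmod 2$ and $d$ is the diagonal of $A$. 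This system is solvable because of the classical fact that the diagonal of a symmetric matrix over $\mathbb{F}_{2}$ lies in its column space (if $Ax=0$ then $d\cdot x={}^{t}xAx=0$ over $\mathbb{F}_{2}$, so $d\in(\ker A)^{\perp}=\mathrm{im}\,A$); the general polarization requires the analogous normal form with elementary divisors. Without this lemma, or an equivalent, your sufficiency half is incomplete.
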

\noindent
{\it Proof.} The proof can be found in \cite[Theorem 3.4, pp.\,81-84]{Si2}. \hfill $\square$

\vskip 0.3cm
Now we consider a principally polarized abelian variety of dimension $g$ with a level structure.
Let $N$ be a positive integer. Let $(\FA=\,\BC^g/L,E)$ be a principally polarized abelian variety of dimension $g$.
From now on we write $\FA=\,\BC^g/L$, where $L$ is a lattice in $\BC^g$. A \textsf{level} $N$
\textsf{structure} on $\FA$ is a choice of a basis $\{ U_i,V_j\}\,(1\leq i,j\leq g)$ for a $N$-torsion points of $\FA$
which is symplectic, in the sense that there exists a symplectic basis $\{ u_i,v_j\}$ of $L$ such that
\begin{equation*}
U_i\equiv {{u_i}\over N}\ (\textrm{mod}\,L)\qquad \textrm{and}\qquad
V_j\equiv {{v_j}\over N}\ (\textrm{mod}\,L),\qquad 1\leq i,j\leq g.
\end{equation*}
For a given level $N$ structure, such a choice of a symplectic basis $\{ u_i,v_j\}$ of $L$ determines a
mapping
$$F:\BR^g\oplus \BR^g\lrt \BC^g$$
such that $F(\BZ^g\oplus \BZ^g)=\,L$ by $F(e_i)=u_i$ and $F(f_j)=v_j$,
where $\{e_i,f_j\}\,(1\leq i,j\leq g)$
is the standard basis of $\BR^g\oplus\BR^g.$ The choice $\{ u_i,v_j\}$ (or equivalently, the mapping $F$) will be referred to as a {\it lift} of the level $N$ structure.
Such a mapping $F$ is well defined modulo the principal
congruence subgroup $\G_g(N)$, that is, if $F'$ is another lift of the level structure, then
$F'\circ F^{-1}\in \G_g(N).$ A level $N$ structure $\{ U_i,V_j\}$ is said to be $ \textsf{compatible}$ with
a real structure $S$ on $(\FA,E)$ if, for some (and hence for any) lift $\{ u_i,v_j\}$ of the level structure,
\begin{equation*}
S\left( {{u_i}\over N}\right)\equiv -{{u_i}\over N}\ (\textrm{mod}\,L)\qquad \textrm{and}\qquad
S \left( {{v_j}\over N}\right) \equiv {{v_j}\over N}\ (\textrm{mod}\,L),\qquad 1\leq i,j\leq g.
\end{equation*}

\begin{definition}
A real principally polarized abelian variety of dimension $g$ with a level $N$ structure is a quadruple
${\mathcal A}=\,(\FA,E,S, \{ U_i,V_j\})$ with $\FA=\BC^g/L$, where $(\FA,E,S)$ is a real principally polarized abelian variety
and $\{ U_i,V_j\}$ is a level $N$ structure compatible with a real structure $S$. An isomorphism
$${\mathcal A}=\,(\FA,E,S, \{ U_i,V_j\})\,\cong\,(\FA',E',S', \{ U_i',V_j'\})={\mathcal A}'$$
is a complex linear mapping $\phi:\BC^g\lrt \BC^g$ such that
\begin{equation}
\phi(L)=\,L',
\end{equation}
\begin{equation}
\phi_*(E)=\,E',
\end{equation}
\begin{equation}
\phi_*(S)=\,S',\ that\ is,\ \ \phi\circ S\circ \phi^{-1}=S',
\end{equation}
\begin{equation}
\phi \left( {{u_i}\over N}\right)\equiv {{u_i'}\over N}\ (\textrm{mod}\,L')\qquad \textrm{and}\qquad
\phi \left( {{v_j}\over N}\right) \equiv {{v_j'}\over N}\ (\textrm{mod}\,L'),\qquad 1\leq i,j\leq g.
\end{equation}
\noindent for some lift $\{ u_i,v_j\}$ and $\{ u_i',v_j'\}$ of the level structures.
\end{definition}

Now we show that a given positive integer $N$ and a given $\Omega\in \BH_g$ determine naturally a
principally polarized abelian variety $(\FA_\Om,E_\Om)$ of dimension $g$ with a level $N$ structure.
Let $E_0$ be the standard alternating form on $\BR^g\oplus\BR^g$ with the symplectic matrix $J_g$
with respect to the standard basis of $\BR^g\oplus\BR^g$. Let $F_\Om:\BR^g\oplus\BR^g\lrt \BC^g$
be the real linear mapping with matrix $(\Om,I_g)$, that is,
\begin{equation}
F_\Om \begin{pmatrix} x \\ y \end{pmatrix}:=\Om\, x+y,\qquad x,y\in\BR^g.
\end{equation}
We define $E_\Om:=\,(F_\Om)_*(E_0)$ and $L_\Om:=\,F_\Om (\BZ^g\oplus\BZ^g).$ Then
$(\FA_\Om=\,\BC^g/L_\Om,E_\Om)$ is a principally polarized abelian variety. The Hermitian form
$H_\Om$ on $\BC^g$ corresponding to $E_\Om$ is given by
\begin{equation}
H_\Om (u,v)=\, {}^tu\,( \textrm{Im}\,\Om)^{-1}\,{\overline v},\quad E_\Om= \textrm{Im}\,H_\Om,
\qquad u,v\in\BC^g.
\end{equation}
\noindent
If $z_1,\cdots,z_g$ are the standard coordinates on $\BC^g$, then the holomorphic 1-forms $dz_1,\cdots,
dz_g$ have the period matrix $(\Om,I_g).$ If $\{e_i,f_j\}$ is the standard basis of $\BR^g\oplus\BR^g$,
then \\ $\left\{ F_\Om(e_i/N),\,F_\Om(f_j/N)\right\}$
(mod\ $L_\Om$) is a level $N$ structure on
$(\FA_\Om,E_\Om)$, which we refer to as the {\it standard} $N$ {\it structure}. Assume that $\Om_1$ and
$\Om_2$ are two elements of $\BH_g$ such that
$$\psi: (\FA_{\Om_1}=\BC^g/L_{\Om_1},E_{\Om_1})\lrt (\FA_{\Om_2}=\BC^g/L_{\Om_2},E_{\Om_2})$$
is an isomorphism of the corresponding principally polarized abelian varieties, i.e.,
$\psi(L_{\Om_1})=L_{\Om_2}$ and $\psi_*(E_{\Om_1})=\,E_{\Om_2}.$ We set
$$ h=\,{}^t\big( F_{\Om_2}^{-1}\circ \psi\circ F_{\Om_1}\big)=\,
\begin{pmatrix} A & B\\
C & D \end{pmatrix}.$$
Then we see that $h\in \G_g$. And we have
\begin{equation}
\Om_1=\,h\cdot\Om_2 =\,(A\Om_2+B)(C\Om_2+D)^{-1}
\end{equation}
and
\begin{equation}
\psi(Z)=\,{}^t(C\Om_2+D)Z,\qquad Z\in\BC^g.
\end{equation}

\vskip 0.2cm
Let
\begin{equation*}
  I_*:=  \begin{pmatrix}
           -I_n & 0 \\
          \,0 & I_n
         \end{pmatrix}.
\end{equation*}
We define the involution $\tau:Sp(g,\BR)\lrt Sp(g,\BR)$ by
\begin{equation*}
  \tau(x):=I_* x I_*,\quad x\in Sp(g,\BR).
\end{equation*}
Precisely $\tau$ is given by
\begin{equation*}
  \tau \begin{pmatrix}
         A & B \\
         C & D
       \end{pmatrix}
       =\begin{pmatrix}
         \,A & -B \\
         -C & \,D
       \end{pmatrix},\quad
       \begin{pmatrix}
         A & B \\
         C & D
       \end{pmatrix}\in Sp(g,\BR).
\end{equation*}
We note that $\tau:Sp(g,\BR)\lrt Sp(g,\BR)$ passes to an involution
(which we denote by the same letter) $\tau:\BH_g\lrt\BH_g$ such that
\begin{equation*}
  \tau(x\cdot \Omega)=\tau(x)\tau(\Omega)\quad {\rm for\ all}\ x\in Sp(g,\BR),\ \Om\in \BH_g.
\end{equation*}
In fact, we can see easily that the involution $\tau:\BH_g\lrt\BH_g$ is the antiholomorphic
involution given by
\begin{equation*}
  \tau(\Omega)=-\overline{\Omega},\quad \Om\in \BH_g.
\end{equation*}
Its fixed point set is the orbit
\begin{equation*}
  i\mathscr{P}_g=GL(g,\BR)\cdot (iI_g)\subset \BC^{(g,g)}.
\end{equation*}
We refer to \cite[pp.\,274-275,\ 278]{Y3}.
\vskip 2mm
Let $\Om\in\BH_g$ such that $\gamma\cdot \Om=\,\tau(\Om)=\,-{\overline \Om}$ for some
$\gamma=\begin{pmatrix} A & B\\
C & D \end{pmatrix}\in \G_g^{\flat}.$ We define the mapping $S_{\gamma,\Om}:\BC^g\lrt\BC^g$ by
\begin{equation}
S_{\gamma,\Om}(Z):=\,{}^t(C\Om+D)\,{\overline Z},\qquad Z\in\BC^g.
\end{equation}
Then we can show that $S_{\gamma,\Om}$ is a real structure on $(\FA_\Om,E_\Om)$ which is compatible
with the polarization $E_\Om$ (that is, $E_\Om(S_{\gamma,\Om}(u),S_{\gamma,\Om}(v))=-E_\Om(u,v)$
for all $u,v\in \BC^g$). Indeed according to Comessatti's Theorem (see Theorem 3.1), $S_{\gamma,\Om}(Z)=\,
{\overline Z},$ i.e., $S_{\gamma,\Om}$ is a complex conjugation. Therefore we have
$$E_\Om(S_{\gamma,\Om}(u),S_{\gamma,\Om}(v))=\,E_\Om({\overline u},{\overline v})=-E_\Om(u,v)$$
for all $u,v\in \BC^g$. From now on we write simply $\s_\Om=\,S_{\gamma,\Om}.$

\begin{theorem} Let $(\FA,E,S)$ be a real principally polarized abelian variety of
dimension $g$. Then there exists $\Om=\,X +\, i\,Y \in \BH_g$ such that $2X\in \BZ^{(g,g)}$ and there
exists an isomorphism of real principally polarized abelian varieties
$$(\FA,E,S)\,\cong\,(\FA_\Om,E_\Om,\sigma_\Om),$$
where $\sigma_\Om$ is a real structure on $\FA_\Om$ induced by a complex conjugation $\sigma:\BC^g\lrt\BC^g$.
\end{theorem}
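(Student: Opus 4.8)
\emph{Proof proposal.} The plan is to realize $(\FA,E)$ by a period matrix, transport $S$ to an anti-symplectic involution on homology, and then normalize that involution by a change of symplectic basis. First I would pick a symplectic basis of $H_1(\FA,\BZ)$ and apply the period-matrix construction of Definition 3.2(3) together with formulas (3.7)--(3.10) to obtain $\Om_0\in\BH_g$ and an isomorphism of principally polarized abelian varieties $(\FA,E)\cong(\FA_{\Om_0},E_{\Om_0})$. Transporting $S$ through it, I may assume $\FA=\FA_{\Om_0}$, $E=E_{\Om_0}$, and that $S$ is an anti-holomorphic involution fixing the origin and satisfying (3.2). As in the paragraph preceding the theorem, $S$ then lifts to an anti-linear map $Z\mapsto R\,\overline Z$ preserving $L_{\Om_0}$, and writing it as complex conjugation followed by the isomorphism attached (via (3.9)--(3.10)) to an element of $\G_g$ shows that $S=S_{\g_0,\Om_0}$ for some $\g_0\in\G_g$ with $\g_0\cdot\Om_0=-\overline{\Om_0}$.

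The key object is the induced involution $M:=S_*$ on $H_1(\FA_{\Om_0},\BZ)\cong\BZ^g\oplus\BZ^g$. Since $S$ is an involution compatible with $E$, one has $M^2=I_{2g}$ and ${}^tM\,J_g\,M=-J_g$; in particular $\omega(Mx,My)=-\omega(x,y)$ for $\omega(x,y):={}^tx\,J_g\,y$. Hence the real $(\pm1)$-eigenspaces $V_{\pm}\subset\BR^{2g}$ of $M$ are each isotropic, and, being complementary, are transverse Lagrangian subspaces of dimension $g$. Therefore $\Lambda_+:=\ker(M-I_{2g})\cap(\BZ^g\oplus\BZ^g)$ is a saturated isotropic sublattice of rank $g$.

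Next I would use the standard fact that a primitive isotropic rank-$g$ sublattice of the unimodular symplectic lattice $(\BZ^g\oplus\BZ^g,J_g)$ completes to a symplectic basis; thus there is $\delta\in\G_g$ carrying $\Lambda_+$ onto $\{0\}\oplus\BZ^g$. The associated change of basis replaces $\Om_0$ by $\Om:=\delta\cdot\Om_0\in\BH_g$ and conjugates $M$ into an involution fixing $\{0\}\oplus\BZ^g$ pointwise; imposing $M^2=I_{2g}$ and ${}^tM\,J_g\,M=-J_g$ on such a block form forces
$$M=\begin{pmatrix}-I_g&0\\ Q&I_g\end{pmatrix},\qquad Q={}^tQ\in\BZ^{(g,g)}.$$
Tracing this back through $F_\Om$ (formula (3.7)) forces the anti-linear lift to be $R=I_g$ and gives $\Om+\overline\Om=Q$; that is, $S$ is exactly complex conjugation $\s_\Om(Z)=\overline Z$ and $2X=Q\in\BZ^{(g,g)}$ with $X=\mathrm{Re}\,\Om$. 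Composing the two isomorphisms yields $(\FA,E,S)\cong(\FA_\Om,E_\Om,\s_\Om)$, as asserted.

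The hard part will be the normalization step: producing $\delta\in\G_g$ that realizes the $(+1)$-eigenlattice of $M$ as a Lagrangian direct summand. This rests on the structure theory of primitive isotropic sublattices of a unimodular symplectic $\BZ$-lattice (and, implicitly, on Comessatti's Theorem 3.1, which guarantees that the real structure is genuinely a complex conjugation). Everything else is bookkeeping with the formulas (3.7)--(3.11); once $M$ is in the displayed normal form, the symmetric integral matrix $Q$ is precisely $2X$, so the half-integrality $2X\in\BZ^{(g,g)}$ is automatic.
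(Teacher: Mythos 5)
Your proposal cannot be checked against an internal argument, because the paper does not prove this theorem at all: it attributes the result to Comessatti \cite{C} and refers the reader to Silhol \cite{Si1, Si2} for the proof. Judged on its own terms, your outline is correct, and it is essentially the classical Silhol--Comessatti argument. You transport $S$ to a period-matrix model, observe that $M=S_*$ is an integral anti-symplectic involution ($M^2=I_{2g}$, ${}^tM\,J_g\,M=-J_g$), note that the $(+1)$-eigenlattice $\Lambda_+$ is a primitive Lagrangian sublattice of rank $g$ (both real eigenspaces are isotropic and complementary, hence each of dimension exactly $g$), move $\Lambda_+$ to the standard Lagrangian $\{0\}\oplus\BZ^g$ by an element of $\G_g=Sp(g,\BZ)$ --- this completion lemma is where unimodularity, i.e.\ principality of the polarization, enters --- and then the block computation forces $M$ into the normal form $\begin{pmatrix} -I_g & 0\\ Q & I_g\end{pmatrix}$ with $Q={}^tQ\in\BZ^{(g,g)}$, which is precisely the paper's formula (3.13) with $Q=2X$; comparing with the lattice basis given by the columns of $(\Om,I_g)$ then yields $R=I_g$ and $\Om+\overline\Om=Q$. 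The lattice-completion step you single out as ``the hard part'' is indeed the only nontrivial input, and it is a standard fact about primitive isotropic sublattices of a unimodular symplectic $\BZ$-lattice.

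Two corrections of detail. First, delete the parenthetical claim that your argument rests ``implicitly on Comessatti's Theorem 3.1'': Theorem 3.1 of the paper is the divisor-theoretic criterion for $S$-reality of a polarization and plays no role here, and if what you mean is the statement currently being proved (which the paper's surrounding text loosely calls Comessatti's Theorem), the appeal would be circular. Fortunately it is also unnecessary: in your normal form, $\tilde S(e_j)=R e_j$ must match the last $g$ columns of $(\Om,I_g)M'$, which are exactly $e_1,\dots,e_g$, so $R=I_g$ is derived, not assumed. Second, be careful with the transposes built into the paper's conventions (3.9)--(3.10): the matrix implementing the change of symplectic basis on $H_1$ and the element acting on $\Om_0$ by the modular action differ by a transpose-inverse, so the element realizing $\Om=\delta\cdot\Om_0$ need not literally be the $\delta$ conjugating $M$. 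Since $Sp(g,\BZ)$ is stable under transposition, this affects only bookkeeping, not the existence assertion, but a careful write-up should fix one convention and stick to it.
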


The above theorem is essentially due to Comessatti \cite{C}. We refer to \cite{Si1, Si2} for the proof of
Theorem 3.2.

\vskip 0.5cm Theorem 3.2 leads us to define the subset ${\mathscr H}_g$ of $\BH_g$ by
\begin{equation}
{\mathscr H}_g:=\,\left\{\,\Om\in\BH_g\,\,|\ \,2\,\textrm{Re}\,\Om \in \BZ^{(g,g)}\ \right\}.
\end{equation}
We note that ${\mathscr H}_g$ is a countable union of analytic subsets (see [26] or [28], p.\,280).
Precisely
\begin{equation*}
  {\mathscr H}_g=\left\{ \Omega=x+i\,y={}^t\Omega\,|\ x,y\in \BR^{(g,g)},\
   y>0,\ 2x_{ij}\in\BZ,\ y_{ij}\in \BR,
  \ 1\leq i\leq j\leq g\right\},
\end{equation*}
where $x=(x_{ij})={}^tx \in \BR^{(g,g)}$ and $y=(y_{ij})={}^ty\in \BR^{(g,g)}$ with $y>0$. Or
\begin{equation*}
  {\mathscr H}_g=\bigcup_{2x\in S_g(\BZ)} \left( x+i\,{\mathscr P}_g\right),
\end{equation*}
where
$$S_g(\BZ):=\left\{ x\in \BR^{(g,g)}\,|\ x \ is\ integeral,\ x={}^tx\,\right\}$$
and
$${\mathscr P}_g:=\left\{ y\in \BR^{(g,g)}\,|\ x={}^tx >0,\ positive\ definite\,\right\}.$$
${\mathscr H}_g$ may be considered as a countable union of semialgebraic subsets.
\vskip 3mm
Assume $\Om=\,X+\,i\,Y\in {\mathscr H}_g.$ Then according to Theorem 3.2, $(\FA_\Om,E_\Om,\sigma_\Om)$ is a real
principally polarized abelian variety of dimension $g$. The matrix $M_\s$ for the action of a complex conjugation
$\s$ on the lattice $L_\Om=\Om\BZ^g+\BZ^g$ with respect to the basis given by the columns of $(\Om,I_g)$ is given by
\begin{equation}
M_\s=\,\begin{pmatrix} -I_g & 0 \\ 2\,X & I_g \end{pmatrix}.
\end{equation}
Since
$${}^tM_\s\, J_g\, M_\s=\,\begin{pmatrix} -I_g & 2\,X \\ 0 & I_g \end{pmatrix}\,J_g\,\begin{pmatrix} -I_g & 0 \\ 2\,X & I_g \end{pmatrix}
=-J_g,$$
the canonical polarization $J_g$ is $\s$-real.

\begin{theorem}
Let $\Om$ and $\Om_*$ be two elements in ${\mathscr H}_g$. Then $\Om$ and $\Om_*$ represent (real) isomorphic triples
$(\FA,E,\s)$ and $(\FA_*,E_*,\s_*)$ if and only if there exists an element $A\in GL(g,\BZ)$ such that
\begin{equation}
2\, \textrm{Re}\,\,\Om_*=\,2\, A\,(\textrm{Re}\,\,\Om)\,\,{}^tA\ \ (\textrm{mod}\,\,2)
\end{equation}
and
\begin{equation}
\textrm{Im}\,\,\Om_*=\,A\,(\textrm{Im}\,\,\Om)\,\,{}^tA.
\end{equation}
\end{theorem}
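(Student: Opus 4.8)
The plan is to reduce the classification up to \emph{real} isomorphism to the description, already recorded in (3.9) and (3.10), of isomorphisms between the underlying principally polarized abelian varieties $(\FA_\Om,E_\Om)$ in terms of the action of $\G_g$ on $\BH_g$, and then to extract the extra arithmetic constraint forced by compatibility with the real structures. The essential simplification I would exploit is that, by Theorem 3.2 and the computation preceding (3.11) (Comessatti's Theorem), for $\Om\in{\mathscr H}_g$ the real structure $\sigma_\Om$ is literally complex conjugation $Z\mapsto{\overline Z}$ on $\BC^g$; this makes the intertwining condition (3.5) transparent. Throughout I write $\Om=X+iY$ and $\Om_*=X_*+iY_*$, where $2X,2X_*\in\BZ^{(g,g)}$ and $Y,Y_*>0$.

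$(\Rightarrow)$ Suppose $\psi\colon(\FA_\Om,E_\Om,\sigma_\Om)\lrt(\FA_{\Om_*},E_{\Om_*},\sigma_{\Om_*})$ is an isomorphism of real principally polarized abelian varieties. Discarding the real structures, (3.9) and (3.10) give $h=\begin{pmatrix}A&B\\C&D\end{pmatrix}\in\G_g$ with $\Om=h\cdot\Om_*$ and $\psi(Z)={}^t(C\Om_*+D)\,Z$. Writing $M={}^t(C\Om_*+D)$, the compatibility condition (3.5), $\psi\circ\sigma_\Om\circ\psi^{-1}=\sigma_{\Om_*}$, becomes $M\,{\overline M}^{\,-1}\,{\overline Z}={\overline Z}$ for all $Z$, so $M={\overline M}$; hence $C\Om_*+D$ is real and its imaginary part $C\,Y_*$ vanishes. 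As $Y_*>0$ is invertible, $C=0$. The symplectic relations then force $A\in GL(g,\BZ)$ and $D={}^t(A^{-1})$, and the action collapses to $\Om=(A\Om_*+B)\,{}^tA=A\,\Om_*\,{}^tA+B\,{}^tA$. Comparing imaginary parts gives $Y=A\,Y_*\,{}^tA$, and comparing real parts after multiplying by $2$ gives $2X=2A\,X_*\,{}^tA+2B\,{}^tA\equiv 2A\,X_*\,{}^tA\pmod 2$, since $B\,{}^tA$ is integral. Replacing $A$ by $A^{-1}\in GL(g,\BZ)$ turns these into (3.14) and (3.15).

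$(\Leftarrow)$ Conversely, assume $A\in GL(g,\BZ)$ satisfies (3.14) and (3.15). I would reverse the computation: set $B_0=X-A^{-1}X_*\,{}^t(A^{-1})$, which is integral by (3.14) and symmetric because $X$ and $X_*$ are. Then $h=\begin{pmatrix}A^{-1}&B_0\,{}^tA\\0&{}^tA\end{pmatrix}$ has integral entries, and the symplectic condition for such a block-upper-triangular matrix reduces to the symmetry of $A\,B_0\,{}^tA$, which holds because $B_0$ is symmetric; hence $h\in\G_g$, and a direct check using (3.14) and (3.15) shows $\Om=h\cdot\Om_*$. By the converse of (3.9)--(3.10), the complex-linear map $\psi(Z)={}^t({}^tA)\,Z=A\,Z$ is an isomorphism $(\FA_\Om,E_\Om)\lrt(\FA_{\Om_*},E_{\Om_*})$ of principally polarized abelian varieties. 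Since $A$ is real, $\psi$ commutes with complex conjugation, i.e. $\psi\circ\sigma_\Om\circ\psi^{-1}=\sigma_{\Om_*}$, so $\psi$ respects the real structures and is an isomorphism of the triples. This establishes the equivalence.

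The crux of the argument is the one step in the forward direction where the geometric compatibility of the two real structures is converted into the purely algebraic statement that the linear part $M={}^t(C\Om_*+D)$ of $\psi$ is a real matrix; everything downstream---forcing $C=0$ via positivity of $Y_*$, and then separating the real and imaginary parts with a reduction modulo $2$---is routine. The only point demanding genuine care is the identification of both $\sigma_\Om$ and $\sigma_{\Om_*}$ with ordinary complex conjugation, which is exactly what Comessatti's normalization (Theorem 3.2 and the discussion around (3.11)) supplies; without it the intertwining identity would not collapse so cleanly.
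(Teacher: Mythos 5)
Your proof is correct and follows essentially the same route as the paper's: reduce to the description (3.9)--(3.10) of isomorphisms of the underlying principally polarized abelian varieties, use the fact that both real structures are complex conjugation (Comessatti) to force $C=0$, compare real and imaginary parts, and in the converse build an explicit block-upper-triangular element of $\G_g$ whose associated linear map is real and hence commutes with conjugation. The only cosmetic differences are that you orient the isomorphism so the roles of $\Om$ and $\Om_*$ are swapped (fixed by replacing $A$ with $A^{-1}$) and that you spell out the integrality and symplecticity checks the paper leaves implicit.
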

\noindent
{\it Proof.} Suppose $(\FA,E,\s)$ and $(\FA_*,E_*,\s_*)$ are real isomorphic. Then we can find an element
$\gamma=\begin{pmatrix} A & B\\
C & D \end{pmatrix}\in \G_g$ such that
$$\Om_*=\,(A\Om+B)(C\Om+D)^{-1}.$$
The map
$$\varphi: \BC^g/L_{\Om_*}=\,\FA_{\Om_*}\lrt \FA_\Om=\,\BC^g/L_\Om$$
induced by the map
$${\widetilde\varphi}:\BC^g\lrt\BC^g,\qquad Z\longmapsto \,{}^t(C\Om+D)\,Z$$
is a real isomorphism. Since ${\widetilde\varphi}\circ \s_* =\,\s\circ {\widetilde\varphi},$ i.e.,
${\widetilde\varphi}$ commutes with complex conjugation on $\BC^g$, we have $C=0.$ Therefore
$$\Om_*=\,(A\Om+B)\,{}^t\!A=\,(AX\,{}^t\!A+B\,{}^t\!A)+\,i\,AY\,{}^t\!A,$$
where $\Om=\,X+\,i\,Y.$ Hence we obtain the desired results (3.14) and (3.15).
\vskip 0.1cm Conversely we assume that there exists $A\in GL(g,\BZ)$ satisfying the conditions
(3.14) and (3.15). Then
$$\Om_*=\,\gamma\cdot \Om=\,(A\Om+B)\,{}^t\!A$$
for some $\gamma=\begin{pmatrix} A & B\\
0 & {}^tA^{-1} \end{pmatrix}\in \G_g$ with $B\in\BZ^{(g,g)}$ with $B\,{}^tA=\,A\,{}^tB.$ The map
$\psi:\,\FA_\Om\lrt \FA_{\Om_*}$ induced by the map
$${\widetilde\psi}:\BC^g\lrt \BC^g,\qquad Z\longmapsto A^{-1}Z$$
is a complex isomorphism commuting complex conjugation $\s$. Therefore $\psi$ is a real isomorphism
of $(\FA,E,\s)$ onto $(\FA_*,E_*,\s_*)$. \hfill $\square$

\vskip 0.3cm
According to Theorem 3.3, we are led to define the subgroup $\G_g^{\star}$ of $\G_g$ by
\begin{equation}
\G_g^{\star}:=\,\left\{\, \begin{pmatrix} A & B\\
0 & {}^tA^{-1} \end{pmatrix}\in \G_g\ \big|\ \ B\in \BZ^{(g,g)},\quad A\,{}^tB=\,B\,{}^t\!A\ \right\}.
\end{equation}
It is easily seen that $\G_g^{\star}$ acts on ${\mathscr H}_g$ properly discontinuously by
\begin{equation}
\gamma\cdot \Om=\,A\Om\,{}^t\!A\,+\,B\,{}^t\!A,
\end{equation}
where $\gamma=\begin{pmatrix} A & B\\
0 & {}^tA^{-1} \end{pmatrix}\in \G_g^{\star}$ and $\Om\in {\mathscr H}_g.$

\vskip 3mm
Now we define the notion of polarized real tori.
\begin{definition}
A real torus $T=\BR^n/\Lambda$ with a lattice $\La$ in $\BR^n$ is said to be $\textsf{polarized}$
if the the associated complex torus $\FA=\BC^n/L$ is a polarized real abelian variety, where
$L=\,\BZ^n+\,i\,\La$ is a lattice in $\BC^n.$ Moreover if $\FA$ is a principally polarized real abelian
variety, $T$ is said to be \textsf{principally polarized}.
Let $\Phi:T\lrt\FA$ be the smooth embedding of $T$ into $\FA$ defined by
\begin{equation}
\Phi(v+\La):=\,i\,v\,+\,L, \qquad v\in\BR^n.
\end{equation}
Let $\FL$ be a polarization of $\FA$, that is, an ample line bundle over $\FA$. The pullback $\Phi^*\FL$ is
called a \textsf{polarization} of $T$. We say that a pair $(T,\Phi^*\FL)$ is a \textsf{polarized real torus}.
\end{definition}

\noindent{\bf Example 3.1.} Let $Y\in \CP$ be a $n\times n$ positive definite symmetric real matrix. Then
$\La_Y=\,Y\BZ^n$ is a lattice in $\BR^n$. Then the $n$-dimensional torus $T_Y=\BR^n/\La_Y$ is a
principally polarized real torus. Indeed,
\begin{equation*}
\FA_Y\,=\,\BC^n/L_Y, \qquad  L_Y\,=\BZ^n+\,i\,\La_Y
\end{equation*}
is a princially polarized real abelian variety.
Its corresponding hermitian form $H_Y$ is given by
\begin{equation*}
H_Y(x,y)\,=\,E_Y(i\,x,y)\,+\,i\,E_Y(x,y)\,=\,{}^tx\,Y^{-1}\,{\overline y},\qquad x,y\in\BC^n,
\end{equation*}
where
$E_Y$ denotes the imaginary part of $H_Y.$ It is easily checked that $H_Y$ is positive definite and
$E_Y(L_Y\times L_Y)\subset \BZ$\,(cf.\,\cite[pp.\,29--30]{Mum}).
The real structure $\s_Y$ on $\FA_Y$ is a complex conjugation.
In addition,
if $\det Y=1$, the real torus $T_Y$ is said to be ${\sf special}$.

\vskip 0.3cm
\noindent{\bf Example 3.2.} Let $Q=\,\begin{pmatrix} \sqrt{2} & \ \sqrt{3} \\ \sqrt{3} & -\sqrt{5}
\end{pmatrix}$ be a $2\times 2$ symmetric real matrix of signature $(1,1)$.
Then $\La_Q=\,Q\BZ^2$ is a lattice in $\BR^2$. Then the real torus $T_Q=\,\BR^2/\La_Q$ is not
polarized because the associated complex torus $\FA_Q=\,\BC^2/L_Q$ is not an abelian variety, where
$L_Q=\BZ^2+\,i\,\La_Q$ is a lattice in $\BC^2$.

\begin{definition}
Two polarized tori $T_1\,=\,\BR^n/\La_1$ and $T_2\,=\,\BR^n/\La_2$ are said to be isomorphic if the associated
polarized real abelian varieties $\FA_1\,=\,\BC^n/L_1$ and $\FA_2\,=\,\BC^n/L_2$ are isomorphic, where
$L_i\,=\,\BZ^n\,+\,i\,\La_i\ (i=1,2),$ more precisely, if there exists a linear isomorphism $\varphi:\BC^n\lrt \BC^n$ such that
\begin{eqnarray}
\varphi (L_1)&=&L_2,\\
\varphi_* (E_1)&=&E_2,\\
\varphi_* (\s_1) &=& \varphi\circ \s_1\circ \varphi^{-1}\,=\,\s_2,
\end{eqnarray}
where $E_1$ and $E_2$ are polarizations of $\FA_1$ and $\FA_2$ respectively, and $\s_1$ and $\s_2$ denotes
the real structures (in fact complex conjugations) on $\FA_1$ and $\FA_2$ respectively.
\end{definition}

\noindent{\bf Example 3.3.} Let $Y_1$ and $Y_2$ be two $n\times n$ positive definite symmetric real matrices. Then
$\La_i:=\,Y_i\,\BZ^n$ is a lattice in $\BR^n$ $(i=1,2)$. We let
$$T_i:=\,\BR^n/\La_i,\qquad i=1,2$$
be real tori of dimension $n$. Then according to Example 3.1, $T_1$ and $T_2$ are principally polarized real tori.
We see that $T_1$ is isomorphic to $T_2$ as polarized real tori if and only if there is an element $A\in GL(n,\BZ)$
such that $Y_2\,=\,A\,Y_1\,{}^tA.$

\end{section}


\vskip 10mm
\begin{section}{{\bf The moduli space of polarized real tori}}
\setcounter{equation}{0}

\vskip 3mm
 For a given fixed positive integer $n$,
we let
$${\mathbb H}_n=\,\{\,\Omega\in \BC^{(n,n)}\,|\ \Om=\,^t\Om,\ \ \ \text{Im}\,\Om>0\,\}$$
be the Siegel upper half plane of degree $n$ and let
$$Sp(n,\BR)=\{ M\in \BR^{(2n,2n)}\ \vert \ ^t\!MJ_nM= J_n\ \}$$
be the symplectic group of degree $n$, where
$$J_n=\begin{pmatrix} 0&I_n\\
                   -I_n&0\end{pmatrix}.$$
Then $Sp(n,\BR)$ acts on $\BH_n$ transitively by
\begin{equation}
M\cdot\Om=(A\Om+B)(C\Om+D)^{-1},
\end{equation} where $M=\begin{pmatrix} A&B\\
C&D\end{pmatrix}\in Sp(n,\BR)$ and $\Om\in \BH_n.$ Let
$$\G_n^{\flat}:=Sp(n,\BZ)=\left\{ \begin{pmatrix} A&B\\
C&D\end{pmatrix}\in Sp(n,\BR) \,\big| \ A,B,C,D\
\textrm{integral}\ \right\}$$ be the Siegel modular group of
degree $n$. This group acts on $\BH_n$ properly discontinuously.

\vskip 0.35cm
Let ${\mathcal A}_n
:=\G_n^{\flat}\backslash \BH_n$ be the Siegel modular variety of degree $n$, that is, the moduli space of $n$-dimensional principally polarized abelian varieties,
and let ${\mathcal M}_n$ be the the moduli space of projective curves of genus $n$. Then according to Torelli's theorem, the Jacobi mapping
\begin{equation}
T_n:{\mathcal M}_n \lrt {\mathcal A}_n
\end{equation}
defined by
\begin{equation*}
C \longmapsto J(C):= {\rm the\ Jacobian\ of}\ C
\end{equation*}
is injective. The Jacobian locus $J_n:=T_n({\mathcal M}_n)$ is a $(3n-3)$-dimensional subvariety
of ${\mathcal A}_n$ if $n\geq 2.$ We denote by $Hyp_n$ the hyperelliptic locus in ${\mathcal A}_n.$

\vskip 3mm
If $Y\in \CP$, according to Example 3.1, $T_Y=\BR^n/\La_Y$ is a principally polarized real torus of dimension $n$ and $\FA_Y=\BC^n/L_Y$ is a principally polarized abelian variety of dimension $n$. Here
$\La_Y=Y\BZ^n$ is a lattice in $\BR^n$ and $L_Y=\BZ^n + i\La_Y$ is a lattice in $\BC^n$. We denote by
$[\FA_Y]$ the isomorphism class of $\FA_Y$.

\vskip 2mm
\newcommand\FY{\mathfrak Y}
\newcommand\FJ{\mathfrak J}
The arithmetic quotient
\begin{equation*}
  \FY_n := \G_n\backslash GL(n,\BR)/O(n,\BR),\qquad \G_n:=GL(n,\BZ)/\{ \pm I_n \}
\end{equation*}
is the moduli space of principally polarized real tori of dimension $n$.

\vskip 3mm
We define
\begin{equation*}
 \FJ_{n,J}:=\left\{  Y\in \CP\,\vert\ \mathfrak A_Y\
 {\rm is\ the\ Jacobian\ of\ a\ curve}\ {\rm of\ genus}\ n,
 \ i.e., \ [\mathfrak A_Y]\in J_n \,   \right\}
\end{equation*}
and
\begin{equation*}
 \FJ_{n,H}:=\left\{  Y\in \CP\,\vert\ \mathfrak A_Y\
 {\rm is\ the\ Jacobian\ of\ a\ hyperelliptic\ curve}\
 {\rm of\ genus}\ n\,  \right\}.
\end{equation*}
We see that $\G_n$ acts on both $\FJ_{n,J}$ and $\FJ_{n,H}$ properly discontinously. So we may define
\begin{equation*}
  \mathfrak Y_{n,J}:=\G_n\backslash \FJ_{n,J}\qquad {\rm and} \qquad
  \mathfrak Y_{n,H}:=\G_n\backslash \FJ_{n,H}.
\end{equation*}
$\mathfrak Y_{n,J}$ and $\mathfrak Y_{n,H}$ are called the
${\sf Jacobian\ real\ locus}$ and the ${\sf hyperelliptic\ real\ locus}$ respectively.

\vskip 3mm
The following natural problem may be regarded as the real version of the Schottky problem.
\vskip 2mm \noindent
{\bf Problem.} Characterize the Jacobian real locus $\mathfrak Y_{n,J}$.

\vskip 3mm
For any positive integer $n\in \BZ^+$, let
\begin{equation*}
  G_n=GL(n,\BR),\quad K_n=O(n,\BR)\quad {\rm and}\quad \G_n=GL(n,\BZ)/\{ \pm I_n \}.
\end{equation*}
For any $m,n\in \BZ^+$ with $m< n$, we define
\begin{equation*}
  \xi_{m,n}:G_m\lrt G_n
\end{equation*}
by
\begin{equation}
  \xi_{m,n}(A):=
  \begin{pmatrix}
    A & 0 \\
    0 & I_{n-m}
  \end{pmatrix},\qquad A\in G_m.
\end{equation}
We let
\begin{equation*}
  G_\infty:=\varinjlim_n G_n,\qquad K_\infty:=\varinjlim_n K_n
  \quad {\rm and}\quad \G_\infty:=\varinjlim_n \G_n
\end{equation*}
be the inductive limits of the directed systems $(G_n,\xi_{m,n}),
\ (K_n,\xi_{m,n})$ and $(\G_n,\xi_{m,n})$ respectively.

\vskip 3mm
For any two positive integers $m,n\in\BZ^+$ with $m<n$, we embed
$\mathscr P_m$ into $\mathscr P_n$ as follows:
\begin{equation*}
 \psi_{m,n}:\mathscr P_m \lrt \mathscr P_n, \qquad
 Y \mapsto \begin{pmatrix}
              Y & 0 \\
              0 & I_{n-m}
            \end{pmatrix},\qquad Y\in \mathscr P_m.
\end{equation*}
We let
\begin{equation*}
  \mathscr P_\infty= \varinjlim_n \mathscr P_n
\end{equation*}
be the inductive limit of the directed system $(\mathscr P_n,\psi_{m,n}).$
We can show that
\begin{equation*}
  \mathscr P_\infty= G_\infty/ K_\infty.
\end{equation*}

\vskip 3mm
Let $\mathfrak Y_n^{S}$ be the Satake compactification of
$\mathfrak Y_n$ (cf.\,Theorem 3 in \cite[pp.\,62-65]{Gr3} or Theorem 5.2 in this article).
We denote by $\mathfrak Y_{n,J}^{S}$ (resp.\,
$\mathfrak Y_{n,H}^{S}$) the Satake compactification of
$\mathfrak Y_{n,J}$ (resp.\,$\mathfrak Y_{n,H}$). We can show that
$\mathfrak Y_{n,J}^{S}$ (resp.\,$\mathfrak Y_{n,H}^{S}$) is the closure of $\mathfrak Y_{n,J}$ (resp.\,$\mathfrak Y_{n,H}$) inside
$\mathfrak Y_n^{S}$. We have the following sequences

\begin{equation*}
  \mathfrak Y_1^{S}\lrt \mathfrak Y_2^{S}\lrt
  \mathfrak Y_3^{S}\lrt \cdots,
\end{equation*}

\begin{equation*}
  \mathfrak Y_{1,J}^{S}\lrt \mathfrak Y_{2,J}^{S}\lrt
  \mathfrak Y_{3,J}^{S}\lrt \cdots
\end{equation*}
and
\begin{equation*}
  \mathfrak Y_{1,H}^{S}\lrt \mathfrak Y_{2,H}^{S}\lrt
  \mathfrak Y_{3,H}^{S}\lrt \cdots.
\end{equation*}

\vskip 5mm\noindent
We put
\begin{equation*}
 \mathfrak Y_{\infty}^{S}:=\varinjlim_n \mathfrak Y_n^S,\qquad
 \mathfrak Y_{\infty,J}^{S}:=\varinjlim_n \mathfrak Y_{n,J}^S\quad  {\rm and}\quad
 \mathfrak Y_{\infty,H}^{S}:=\varinjlim_n \mathfrak Y_{n,H}^S.
\end{equation*}

\end{section}


\vskip 10mm
\begin{section}{{\bf Automorphic forms for $GL(n,\BR)$}}
\setcounter{equation}{0}

\vskip 3mm
Let
\begin{equation*}
  \SP:=\left\{ Y\in \BR^{(n,n)}\,|\ Y=\,^tY >0,\ \det (Y)=1\ \right\}.
\end{equation*}
be a symmetric space associated to $SL(n,\BR)$. Indeed, $SL(n,\BR)$ acts on $\SP$ transitively by
\begin{equation}
  g\cdot Y= gY\,{}^tg,\qquad g\in SL(n,\BR),\ Y\in \SP.
\end{equation}
Thus $\SP$ is a smooth manifold diffeomorphic to the symmetric space $SL(n,\BR)/SO(n,\BR)$ through the bijective map
\begin{equation*}
  SL(n,\BR)/SO(n,\BR) \lrt \SP,\qquad g\,SO(n,\BR) \mapsto g\,{}^tg,\quad g\in SL(n,\BR).
\end{equation*}

\vskip 3mm
For $Y\in\SP$, we have a partial Iwasawa decomposition
\begin{equation}
  Y= \begin{pmatrix}
       v^{-1} & 0 \\
       0 & v^{1/(n-1)}W
      \end{pmatrix}
       \left[ \begin{pmatrix}
                1 & ^tx \\
                0 & I_{n-1}
              \end{pmatrix} \right] =\begin{pmatrix}
                v^{-1} & v^{-1}\,^tx \\
                v^{-1}x & v^{-1}x\,^tx + v^{1/(n-1)}W
              \end{pmatrix}
\end{equation}
where $v>0,\ x\in \BR^{(n-1,1)}$ and $W\in \mathfrak{P}_{n-1}.$ From now on, for brevity, we
write $Y=[v,x,W]$ instead of the decomposition (5.2). In these coordinates $Y=[v,x,W]$,
\begin{equation*}
  ds_Y^2 =\frac{n}{n-1}\, v^{-2} dv^2 + 2\,v^{-n/(n-1)} W^{-1}[dx] + ds_W^2
\end{equation*}
is a $SL(n,\BR)$-invariant metric on $\SP$, where $dx=\,^t(dx_1,\cdots, dx_{n-1})$ and
$ds_W^2$ is a $SL(n-1,\BR)$-invariant metric on $\mathfrak P_{n-1}$.
The Laplace operator $\Delta_n$ of $(\SP, ds_Y^2)$ is given by
\begin{equation*}
   \Delta_n=\frac{n-1}{n}\, v^{2} {{\partial^2}\over {\partial v^2}} - \frac{1}{n}
   {{\partial}\over {\partial v}} + v^{n/(n-1)}\, W \left[ {{\partial}\over {\partial x}} \right]+\Delta_{n-1}
\end{equation*}
inductively, where if $x=\,^t(x_1,\cdots,x_{n-1})\in \BR^{(n-1,1)}$,
$${{\partial}\over {\partial x}}=\,{}^{{}^{{}^{{}^\text{\scriptsize $t$}}}}\!\!\!
\left( {{\partial}\over {\partial x_1}},\cdots,
{{\partial}\over {\partial x_{n-1}}}  \right)$$
and $\Delta_{n-1}$ is the Laplace operator of $(\mathfrak P_{n-1}, ds^2_W)$.
\begin{equation*}
d\mu_n=v^{-(n+2)/2}\, dv\, dx\, d\mu_{n-1}
\end{equation*}
is a $SL(n,\BR)$-invariant volume element on $\SP$, where $dx=dx_1\cdots dx_{n-1}$ and
$d\mu_{n-1}$ is a $SL(n-1,\BR)$-invariant volume element on $\mathfrak P_{n-1}$.

\vskip 3mm
Following earlier work of Minkowski, Siegel \cite{Sie} showed that the volume of the fundamental domain
$SL(n,\BZ)\backslash \SP$ is given as follows\,:
\begin{equation}
  {\rm Vol}(SL(n,\BZ)\backslash \SP)
  =\int_{SL(n,\BZ)\backslash \SP}d\mu_n=
  n\,2^{n-1}\prod_{k=2}^{n}{{\zeta (k)}\over {{\rm Vol}(S^{k-1})}},
\end{equation}
where
\begin{equation*}
{\rm Vol}(S^{k-1})={{2\,(\sqrt{\pi})^k}\over {\Gamma(k/2)}}
\end{equation*}
denotes the volume of the $(k-1)$-dimensional sphere $S^{k-1}$, $\Gamma (x)$ denotes the usual Gamma function and $\zeta (k)=\sum_{m=1}^{\infty} m^{-k}$ denotes the Riemann zeta function. The proof of (5.3) can be found in \cite[pp.\,27-37]{G} and \cite{Sie}.

\vskip 3mm
If we repeat this partial decomposition process for $W$, we get the Iwasawa decomposition
\begin{equation}
  Y=y^{-1} {\rm{diag}}
  \left( 1,y_1^2,(y_1y_2)^2,\cdots,(y_1y_2\cdots y_{n-1})^2 \right)
  \left[ \begin{pmatrix}
           1 & x_{12} & \cdots & x_{1n} \\
           0 & 1 & \cdots & x_{2n}\\
           0 & 0 & \ddots &  \vdots\\
           0 & 0 &   0  & 1
         \end{pmatrix} \right],
\end{equation}
where $y>0,\ y_j\in \BR \,(1\leq j\leq n-1)$ and $x_{ij}\in\BR (1\leq i < j\leq n).$ Here
$y=y_1^{2(n-1)}\cdots y_{n-1}^2$ and ${\rm{diag}} (a_1,\cdots,a_n)$ denotes the $n\times n$ diagonal matrix with diagonal entries $a_1,\cdots,a_n$. In this case we denote $Y=(y_1,\cdots,y_{n-1},x_{12},\cdots,x_{n-1,n}).$

\vskip 5mm
Define $\Gamma_n=GL(n,\BZ)/\{ \pm I_n \}.$ We observe that $\G_n=SL(n,\BZ)/\{ \pm I_n \}$ if $n$ is even, and $\G_n=SL(n,\BZ)$ if $n$ is odd.
An \textsf{automorphic form} for $\G_n$ is defined to be a real analytic function $f\!:\!\SP\lrt \BC$ satisfying the following conditions (AF1)--(AF3)\,:
\vskip 2mm
(AF1) $f$ is an eigenfunction for all
$GL(n,\BR)$-invariant\ differential\ operators on $\SP$.
\vskip 2mm
(AF2) \ \ $f(\g Y\,{}^t\g)=f(Y)\quad {\rm for\ all}\ \g\in SL(n,\BR)
\ {\rm and}\ Y\in \SP.$
\vskip 2mm
(AF3)\ There\ exist\ a constant  $C>0$ and $s\in \BC^{n-1}$
with  $s=(s_1,\cdots,s_{n-1})$   \\
\indent \ \ \ \ \ \ \ \ \ such that
$|f(Y)| \leq C\,|p_{-s}(Y)|$ as the upper left determinants
$\det Y_j\lrt \infty,$ \\
\indent \ \ \ \ \ \ \ \ \ $j=1,2,\cdots, n$, where
\begin{equation*}
  p_{-s}(Y):=\prod_{j=1}^{n-1} (\det Y_j)^{-s_j}
\end{equation*}
\indent \ \ \ \ \ \ \ \ \
is the Selberg's power function\,(cf.\,\cite{S1, T}).

\vskip 3mm
We denote by $A(\G_n)$ the space of all automorphic forms for $\G_n.$ A \textsf{cusp form} $f\in A(\G_n)$ is an automorphic form for $\G_n$ satisfying the following conditions\,:
\begin{equation*}
\int_{X\in (\BR/\BZ)^{(j,n-j)}}
f \left( Y\left[ \begin{pmatrix}
                   I_j & X \\
                   0 & I_{n-j}
                 \end{pmatrix}\right]\right)dX=0,
                 \quad 1\leq j\leq n-1.
\end{equation*}
We denote by $A_0(\G_n)$ the space of all cusp forms for $\G_n.$

\vskip 3mm
For $s=(s_1,\cdots,s_{n-1})\in \BC^{n-1},$ we now consider the following Eisenstein series
\begin{equation}
  E_n(s,Y):=\sum_{\gamma\in \G_n/\G_{\star}} p_{-s} ( Y[\gamma]),\qquad  Y\in\SP,
\end{equation}
where $\G_{\star}$ is a subgroup of $\G_n$ consisting of all upper triangular matrices.
$E_n(s,Y)$ is a special type of the more general Eisenstein series introduced by Atle Selberg
\cite{S2}. It is known that the series converges for ${\rm Re} (s_j)>1,\ j=1,2,\cdots, n-1,$ and has analytic continuation for each of the variables $s_1,\cdots,s_{n-1}$ (cf.\,\cite{S2, T}). It is seen that $E_n(s,Y)$ is a common eigenfunction of all invariant differential operators in $\BD (\CP)$.
Its corresponding eigenvalue of the Laplace operator $\Delta_n$ is given by
\begin{eqnarray*}
  \lambda &=& \frac{n-1}{n} (s_1+\xi_1)\left( s_1-1+\xi_1 - \frac{1}{n-1} \right) \\
   & & + \frac{n-2}{n-1} (s_2+\xi_2)\left( s_2-1+\xi_2 - \frac{1}{n-2} \right)+\cdots +
   \frac{1}{2} s_{n-1}(s_{n-1}-2),
\end{eqnarray*}
where $\xi_{n-1}=0$ and
\begin{equation}
  \xi_j=\frac{1}{n-j} \sum_{k=j+1}^{n-1} (n-k) s_k,\qquad j=1,2,\cdots,n-2.
\end{equation}

\vskip 3mm
Let $f\in A(\G_n)$ be an automorphic form. Since $f(Y)$ is invariant under the action of
the subgroup
$\left\{ \begin{pmatrix}
   1 & a \\
   0 & I_{n-1}
 \end{pmatrix} \ \Big|\ a\in \BZ^{(1,n-1)}\,\right\}$ of $\G_n$, we have the Fourier expansion
\begin{equation}
  f(Y)=\sum_{N\in \BZ^{(n-1,1)}} a_N (v,W) e^{2\pi i\, ^txN},
\end{equation}
where
$Y=[v,x,W]\in \SP$ and
\begin{equation*}
  a_N (v,W)=\int_{0}^{1}\int_{0}^{1}\cdots \int_{0}^{1} f([v,x,W])\, e^{-2\pi i\, ^txN} dx.
\end{equation*}

\vskip 2mm
For $s\in \BC^{n-1}$ and $A,B\in \SP$, we define the $K$-Bessel function
\begin{equation}
K_n (s\,|\,A,B):=\int_{\CP} p_s(Y) \,e^{Tr (AY+BY^{-1})}d\mu_n,
\end{equation}
where $d\mu_n$ is the $GL(n,\BR)$-invariant volume element on $\CP$\,(see (2.2)).

\vskip 3mm
Let $f\in A(\G_n)$ be an automorphic form for $\G_n$. Thus we have $n$ differential equations
$D_j f=\lambda_j f\,(1\leq j\leq n).$ Here $D_1,\cdots,D_n$ are $GL(n,\BR)$-invariant differential operators
defined by (2.3). We can find $s=(s_j)\in \BC^{n-1}$ satisfying the various relations determined by the
$\lambda_j\,(1\leq j\leq n)$. D. Grenier \cite[Theorem 1, pp.\,469-471]{Gr2} proved that $f$ has the following Fourier expansion
\begin{eqnarray}
  f(Y) &=& f([v,x,W]) \nonumber\\
   &=& a_0(v,W)+\sum_{0\neq m\in \BZ^{n-1}} \sum_{\gamma\in \G_{n-1}/P} a_m (v,W) v^{(n-1)/2}\\
   & & \times\, K_{n-1}({\hat s}\,|\,v^{1/(n-1)}\,^t\gamma W\gamma,\pi^2 v\, m\,^tm)\,e^{2\pi i\, ^tx\gamma m},\nonumber
\end{eqnarray}
where ${\hat s}=\left( s_1-\frac{1}{2},s_2,\cdots,s_{n-1} \right)$ and $P$ denotes the parabolic subgroup of $\G_{n-1}$ consisting of the form
$\begin{pmatrix}
  \pm 1 & b \\
  0 & d
\end{pmatrix}$ with $d\in \G_{n-2}.$

\vskip 3mm
D. Grenier \cite{Gr1, Gr3} found a fundamental domain ${\mathfrak F}_n$ for $\G_n$ in $\SP$.
The fundamental domain ${\mathfrak F}_n$ is precisely the set of all $Y=[v,x,W]\in \SP$ satisfying
the following conditions (F1)-(F3)\,:
\vskip 1mm\noindent
(F1) $(a+\,^txc)^2+v^{n/(n-1)} W[c]\geq 1$ for all
$\begin{pmatrix}
  a & ^t b \\
  c & d
\end{pmatrix}\in \G_n$ with $a\in \BZ,\ b,c\in \BZ^{(n-1,1)}$ \\
\indent \indent and $d\in \BZ^{(n-1,n-1)}.$

\vskip 2mm\noindent
(F2) $W\in {\mathfrak F}_{n-1}.$

\vskip 2mm\noindent
(F3)\ $0\leq x_1 \leq \frac{1}{2},\ \, |x_j|\leq 2$ for $2\leq j\leq n-2.$ Here
$x=\,^t (x_1,\cdots,x_{n-1})\in \BR^{(n-1,1)}.$

\vskip 3mm
For a positive real number $t>0$, we define the $\textsf{Siegel\ set} \ {\mathscr S}_{t,1/2}$ by
\begin{equation*}
  {\mathscr S}_{t,1/2}:=\left\{ Y\in \SP\,|\ y_i\geq t^{-1/2}\,(1\leq i\leq n-1),\ |x_{ij}|\leq
  \frac{1}{2}\,(1\leq i < j \leq n)\,\right\}.
\end{equation*}
Here we used the coordinates on $\SP$ given in Formula (5.4).

\vskip 2mm
Grenier \cite{Gr3} proved the following theorems\,:
\begin{theorem}
  Let
\begin{equation*}
 {\mathfrak F}_n^{\sharp}:=\bigcup_{\gamma\in D_n} {\mathfrak F}_n [\gamma]\subset \SP,
\end{equation*}
where $D_n$ is the subgroup of $\G_n$ consisting of diagonal matrices
${\rm diag}(\pm 1,\cdots,\pm 1).$ Then
\begin{equation*}
{\mathscr S}_{1,1/2} \subset {\mathfrak F}_n^{\sharp} \subset {\mathscr S}_{4/3,1/2}.
\end{equation*}
\end{theorem}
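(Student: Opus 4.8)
The plan is to induct on $n$ using the partial Iwasawa decomposition $Y=[v,x,W]$ of (5.2), in which $W\in\mathfrak P_{n-1}$ and the defining data of $\mathfrak F_n$ split according to the block structure: (F2) is precisely the recursive requirement $W\in\mathfrak F_{n-1}$, while (F1) and (F3) govern the top layer. The first move is to reinterpret (F1). For an integral vector $\xi=\binom{a}{c}$ with $a\in\BZ$, $c\in\BZ^{(n-1,1)}$, formula (5.2) gives
\[
Y[\xi]={}^t\xi\,Y\,\xi=v^{-1}(a+{}^txc)^2+v^{1/(n-1)}W[c],
\]
so multiplying the left-hand side of (F1) by $v^{-1}$ shows that (F1) is equivalent to $Y[\xi]\ge v^{-1}=Y_{11}$ for all admissible $\xi$; that is, $\mathfrak F_n$ encodes Grenier's reduction condition that the minimum of $Y$ over primitive integral vectors be attained at $e_1$. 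Finally, an element of $D_n={\rm diag}(\pm1,\dots,\pm1)$ fixes every Iwasawa height $y_i$ and sends $x_{ij}\mapsto\pm x_{ij}$; choosing the $n-1$ consecutive sign ratios suitably makes the leading off-diagonal entry nonnegative at each recursion level, so forming $\mathfrak F_n^{\sharp}$ replaces the one-sided bound $0\le x_1\le\frac12$ of (F3) by the symmetric bound $|x_1|\le\frac12$ at every level.

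For the inclusion $\mathscr S_{1,1/2}\subset\mathfrak F_n^{\sharp}$ I would take $Y\in\mathscr S_{1,1/2}$, so that $y_i\ge1$ and $|x_{ij}|\le\frac12$, and first fold $Y$ into a translate $\mathfrak F_n[\gamma]$ by the sign reduction above. Conditions (F3) and (F2) are then routine: the bounds $|x_{ij}|\le\frac12\le2$ give (F3), and since the Iwasawa data of $W$ are exactly the coordinates $(y_2,\dots,y_{n-1};x_{ij},\,i\ge2)$ of $Y$, the matrix $W$ lies in the $(n-1)$-dimensional Siegel set $\mathscr S_{1,1/2}$ and hence, by the inductive hypothesis, in $\mathfrak F_{n-1}^{\sharp}$, which is (F2). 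The substantive point is (F1): one must verify $v^{-1}(a+{}^txc)^2+v^{1/(n-1)}W[c]\ge v^{-1}$ for every primitive $\xi$. The case $c=0$ is immediate, and for $c\ne0$ one bounds $W[c]$ below by the inductive reducedness of $W$ and absorbs the indefinite cross term using $|x_{ij}|\le\frac12$ and $y_i\ge1$; the threshold $t=1$ is exactly what makes the top layer tall enough for this estimate to go through.

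For the reverse inclusion $\mathfrak F_n^{\sharp}\subset\mathscr S_{4/3,1/2}$ the heart is the height bound $y_i\ge(4/3)^{-1/2}$, and the decisive input is the single instance $\xi=\binom{0}{e_1}$ of (F1), namely $Y_{11}\le Y_{22}$, which reads
\[
v^{-1}\le v^{-1}x_1^2+v^{1/(n-1)}W_{11},
\]
and therefore, using $|x_1|\le\frac12$,
\[
v^{n/(n-1)}\ge\frac{1-x_1^2}{W_{11}}\ge\frac{3}{4\,W_{11}}.
\]
The factor $\frac34=1-(\frac12)^2$ is the precise origin of the constant $4/3$: its reciprocal measures how far the top height may sink below the value forced by reducedness. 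Combining this with the inductive bound on the heights of $W$ and translating $v$ back into $y_1$ through the coordinate relations propagates $y_i\ge(4/3)^{-1/2}$ through every layer; because the bound $|x_1|\le\frac12$ survives verbatim at each level, the constant $4/3$ does not degrade. The containment of the off-diagonal box requires, in addition, reducing the first-row entries against the residual integral unipotent translations fixing the reduced $W$, which is where the finer part of (F3) and the full strength of (F1) enter.

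The base case $n=2$ is the classical statement that $\{\,|z|\ge1,\ |{\rm Re}\,z|\le\frac12\,\}$ contains the horocyclic strip ${\rm Im}\,z\ge1$ and is contained in the strip ${\rm Im}\,z\ge\frac34$, corresponding to $t=1$ and $t=4/3$ under ${\rm Im}\,z=y_1^2$; this is verified directly from $({\rm Im}\,z)^2=|z|^2-({\rm Re}\,z)^2$. I expect the main obstacle to be the (F1) verification in the lower inclusion. In contrast to the upper inclusion, which collapses to the single comparison $Y_{11}\le Y_{22}$, proving that $Y_{11}$ is the global minimum over all primitive $\xi$ demands a uniform lower bound on $v^{-1}(a+{}^txc)^2+v^{1/(n-1)}W[c]$ in which the indefinite cross term $2v^{-1}a\,{}^txc$ is dominated by the diagonal terms using only $y_i\ge1$ and $|x_{ij}|\le\frac12$. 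Making this estimate close simultaneously for all $\xi$, and thereby confirming that $t=1$ is the correct threshold, is the delicate step; granted it, both inclusions follow from the induction outlined above.
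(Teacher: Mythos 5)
The paper itself contains no argument for this theorem---its ``proof'' is only a citation to Theorem 1 of Grenier \cite{Gr3}---so your outline can only be judged on whether it would close into a complete proof, and it would not: the step you yourself defer (``the delicate step,'' the verification of (F1) for every $Y\in\mathscr{S}_{1,1/2}$) is the entire content of the lower inclusion, and the plan you sketch for it is misdirected. You propose to dominate ``the indefinite cross term $2v^{-1}a\,{}^t\!xc$'' by the diagonal terms, but there is no indefinite term to absorb: in the identity $Y[\xi]=v^{-1}(a+{}^t\!xc)^2+v^{1/(n-1)}W[c]$ both summands are nonnegative, so for $c\neq 0$ it suffices to prove $v^{n/(n-1)}W[c]\geq 1$, while for $c=0$ primitivity forces $a=\pm 1$. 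The needed inequality comes not from any cross-term estimate but from the triangular shape of the Iwasawa coordinates: by (5.4) one has $v^{n/(n-1)}W=\mathrm{diag}\bigl(y_1^2,(y_1y_2)^2,\dots,(y_1\cdots y_{n-1})^2\bigr)[n']$ with $n'$ unit upper triangular, so if $j_0$ is the largest index with $c_{j_0}\neq 0$, the $j_0$-th term of $v^{n/(n-1)}W[c]$ equals $(y_1\cdots y_{j_0})^2c_{j_0}^2\geq 1$, using only $y_i\geq 1$; the entries $x_{ij}$ never enter. Until this (or some substitute) is in place, the lower inclusion is asserted, not proved.

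Two further corrections. In the lower inclusion your induction yields $W\in\mathfrak{F}_{n-1}^{\sharp}$, whereas (F2) demands $W\in\mathfrak{F}_{n-1}$; the fix is to choose $\delta\in D_{n-1}$ with $W[\delta]\in\mathfrak{F}_{n-1}$, pass to $Y[\gamma]$ with $\gamma=\mathrm{diag}(\epsilon_1,\delta)$---noting that (F1) is $D_n$-invariant because sign matrices permute the admissible vectors $\xi$---and then choose $\epsilon_1$ independently to arrange $0\leq x_1\leq\tfrac12$; your ``sign reduction'' gestures at this but conflates the two sets. In the upper inclusion, your closing remark about ``reducing the first-row entries against the residual integral unipotent translations'' is not a legitimate move: a set inclusion must be checked on the coordinates of $Y$ itself, with no further reduction permitted. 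What actually saves the claim is that the paper's (F3) is a mis-transcription of Grenier's condition, which reads $|x_j|\leq\tfrac12$ for $2\leq j\leq n-1$ (with the printed bound $|x_j|\leq 2$ the inclusion $\mathfrak{F}_n^{\sharp}\subset\mathscr{S}_{4/3,1/2}$ would simply be false); with the correct (F3) the box containment follows at once from induction via (F2). Your identification of the decisive test vector $\xi={}^t(0,1,0,\dots,0)$, giving $y_1^2=v^{n/(n-1)}W_{11}\geq 1-x_1^2\geq\tfrac34$ and hence the constant $4/3$, is correct, as are the base case $n=2$ and the overall inductive skeleton.
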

\noindent
{\it Proof.} See Theorem 1 in \cite[pp.\,58-59]{Gr3}. \hfill $\square$

\begin{theorem}
  Let
\begin{equation*}
 {\mathfrak F}_n^*:={\mathfrak F}_n\cup {\mathfrak F}_{n-1} \cup \cdots \cup {\mathfrak F}_2
 \cup {\mathfrak F}_1
\end{equation*}
and
\begin{equation*}
V_n^*:=V_n \cup V_{n-1} \cup \cdots \cup V_1 \cup V_0,\qquad V_n:=\G_n\backslash \SP.
\end{equation*}
Then ${\mathfrak F}_n^*$ is a compact Hausdorff space whose topology is induced by the closure of
${\mathfrak F}_n$ in $\SP.$ And $V_n^*$ is a compact Hausdorff space called
the {\bf Satake\ compactification} of
$V_n$.
\end{theorem}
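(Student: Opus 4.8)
The plan is to prove Theorem 5.2 in two stages: first establish that $\mathfrak{F}_n^*$ is a compact Hausdorff space, and then deduce that $V_n^* = \Gamma_n \backslash \mathcal{SP}$ (together with the lower-dimensional boundary strata) is compact Hausdorff, identifying it as the Satake compactification. The starting point is Theorem 5.1, which sandwiches the reflected fundamental domain $\mathfrak{F}_n^\sharp$ between two Siegel sets $\mathscr{S}_{1,1/2} \subset \mathfrak{F}_n^\sharp \subset \mathscr{S}_{4/3,1/2}$. This sandwiching is the crucial quantitative input: it shows that $\mathfrak{F}_n$ is controlled by explicit coordinate bounds, so that the only way a sequence in $\mathfrak{F}_n$ can fail to have a convergent subsequence within $\mathcal{SP}$ is by having one or more of the Iwasawa coordinates $y_i$ tend to infinity, i.e.\ by escaping to a cusp.

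First I would analyze the cusp structure. Using the partial Iwasawa decomposition $Y = [v,x,W]$ from Formula (5.2), a point degenerates precisely when $v \to \infty$ (equivalently, when the upper-left determinants grow), and in that limit the $[v,x,W]$-coordinates ``forget'' the $v$ and $x$ directions and retain only the $W \in \mathfrak{P}_{n-1}$ component. This is the geometric content of attaching $\mathfrak{F}_{n-1}$ as a boundary piece: the map sending a divergent sequence to the limiting $W$-data realizes $\mathfrak{F}_{n-1}$ as the boundary stratum of $\mathfrak{F}_n$. Iterating the decomposition down through $W$ (as in Formula (5.4)) shows that the full boundary is the nested union $\mathfrak{F}_{n-1} \cup \cdots \cup \mathfrak{F}_1$, and I would argue by induction on $n$: assuming $\mathfrak{F}_{n-1}^*$ is compact Hausdorff, one glues it onto the closure of $\mathfrak{F}_n$ in $\mathcal{SP}$ along this degeneration map. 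The topology on $\mathfrak{F}_n^*$ is declared to be that induced by the closure of $\mathfrak{F}_n$; what must be checked is that this closure, with boundary points identified via the limiting $W$-coordinate, is exactly $\mathfrak{F}_n^*$ as a set and that the resulting space is Hausdorff.

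For compactness I would show sequential compactness (the space is second countable, so this suffices): given any sequence in $\mathfrak{F}_n^*$, either infinitely many terms stay in a region where $v$ is bounded above—where the Siegel-set bounds of Theorem 5.1 confine all coordinates to a compact set and we extract a convergent subsequence in $\mathcal{SP}$—or $v \to \infty$ along a subsequence, in which case the $W$-coordinates lie in $\mathfrak{F}_{n-1}^*$, which is compact by the inductive hypothesis, so a further subsequence converges to a boundary point. The Hausdorff property follows because distinct limit points are separated either by the metric $ds_Y^2$ in the interior or by the distinct boundary strata they lie in, and the gluing map is continuous and open onto its image. Finally, passing to the quotient $V_n^* = \Gamma_n \backslash \mathcal{SP} \cup \cdots$, compactness of $V_n^*$ is immediate since it is the continuous image of the compact set $\mathfrak{F}_n^*$ under the quotient projection (by property (R1)-type surjectivity of the fundamental domain), and the Satake-compactification identification follows from comparing this construction with Grenier's in \cite{Gr3}.

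The main obstacle I anticipate is verifying the Hausdorff property at the junctures between strata, i.e.\ checking that the topology induced by the closure of $\mathfrak{F}_n$ does not accidentally identify or fail to separate boundary points coming from different $\Gamma_n$-translates of the cusp. Concretely, one must rule out that two sequences converging to genuinely different points of $\mathfrak{F}_{n-1}$ become indistinguishable in the quotient topology; this requires the finiteness statement analogous to (R3)—that $\mathfrak{F}_n \cap (\mathfrak{F}_n[\gamma])$ is nonempty for only finitely many $\gamma$—to guarantee that the boundary identifications are locally finite and hence that the glued space is Hausdorff. I expect this separation-at-the-boundary argument, rather than the compactness extraction, to be where the real work lies.
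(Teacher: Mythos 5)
The first thing to note is that the paper does not actually prove this theorem: its ``proof'' is the citation to Grenier \cite[Theorem 3, pp.\,62--65]{Gr3}. So the only meaningful comparison is with Grenier's argument, and your outline does follow that same route: induction on $n$, the partial Iwasawa decomposition $Y=[v,x,W]$, the Siegel-set sandwich of Theorem 5.1, and attachment of the boundary at $v=\infty$. Your sequential-compactness dichotomy is sound as far as it goes: inside $\mathscr{S}_{4/3,1/2}$ every $y_i$ is bounded below, and since $v$ is a product of positive powers of the $y_i$, bounding $v$ above bounds all coordinates, while the case $v\to\infty$ is absorbed by the inductive hypothesis on $\mathfrak{F}_{n-1}^*$.

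There are, however, two genuine gaps. First, $\mathfrak{F}_n$ is already closed in $\mathfrak{P}_n$ (the conditions (F1)--(F3) are closed conditions), so the closure of $\mathfrak{F}_n$ taken inside $\mathfrak{P}_n$ is $\mathfrak{F}_n$ itself and contains no boundary points whatsoever; the strata $\mathfrak{F}_j$ with $j<n$ are not subsets of $\mathfrak{P}_n$ at all. Hence the topology on $\mathfrak{F}_n^*$ cannot literally be ``induced by a closure'' in $\mathfrak{P}_n$: one must define it, e.g.\ by declaring the sets $\left\{ [v,x,W']\,:\, v>N,\ W'\in U \right\}\cup U$ (with $U$ a neighborhood in $\mathfrak{F}_{n-1}^*$) to be basic neighborhoods of a boundary point, and then run the compactness and Hausdorff arguments against that definition. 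Your proposal gestures at this (``limiting $W$-coordinate'') but never fixes the topology, and both halves of your argument implicitly depend on it. Second, and more seriously, for $V_n^*$ you establish only quasi-compactness (continuous image of a compact set); the Hausdorff property of the quotient --- which is the real content of the theorem and of Grenier's Theorem 3 --- is deferred to ``comparison with Grenier'' plus an unproven (R3)-style finiteness claim. That finiteness claim concerns the interior; to separate $\Gamma_n$-inequivalent boundary points one needs the additional reduction-theoretic fact that any $\gamma\in\Gamma_n$ carrying one horoball region $\{v>N\}$ of the cusp into another lies (up to a finite set) in the parabolic subgroup stabilizing that cusp, so that the identifications induced on the boundary stratum are exactly the $\Gamma_{n-1}$-identifications and nothing more. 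Without this, one cannot exclude that the quotient topology fails to separate two boundary points, or a boundary point from an interior point. As written, your proposal (modulo the first issue) yields the assertion about $\mathfrak{F}_n^*$, but the assertion about $V_n^*$ --- that it is a compact \emph{Hausdorff} space --- is not proven; you correctly identified this as the crux, but identifying the crux is not the same as resolving it.
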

\noindent
{\it Proof.} See Theorem 3 in \cite[pp.\,62-65]{Gr3}. \hfill $\square$

\end{section}

\vskip 10mm


\begin{section}{{\bf Stable automorphic forms for the general linear group}}
\setcounter{equation}{0}

\vskip 3mm
In this section, we introduce the concept of the stability of automorphic forms for the general linear group using the Grenier operator, and relate the stability of automorphic forms to the moduli space of principally polarized real tori and the Jacobian real locus.

\begin{definition}
Let $f\in A(\G_n)$ be an automorphic form for $\G_n$ with eigenvalues determined by $s=(s_1,\cdots,s_{n-1})\in \BC^{(n-1)}$.
We set
\begin{equation*}
  \xi_1={\frac{1}{n-1}} \sum_{n=2}^{n-1} (n-k)s_k  \qquad ({\rm cf}.\ {\rm Formula}\ (5.6)).
\end{equation*}
We define formally, for any $f\in A(\G_n)$,
\begin{equation}
  \mathfrak L_n f (W):=\lim_{v\lrt \infty} v^{-s_1-\xi_1}f(Y),
  \quad v>0,\ W\in {\mathfrak P}_{n-1},\ Y\in \SP,
\end{equation}
where $Y,\, v,\, W$ are determined by the unique partial Iwasawa decomposition of $Y$
given by
\begin{equation*}
  Y=\begin{pmatrix}
      1 & 0 \\
      x & I_{n-1}
    \end{pmatrix}
    \begin{pmatrix}
      v^{-1} & 0 \\
      0 & v^{\frac{1}{n-1}}W
    \end{pmatrix}
    \begin{pmatrix}
      1 & {}^tx \\
      0 & I_{n-1}
    \end{pmatrix}\quad ({\rm see}\ (5.2)).
\end{equation*}
\end{definition}

D. Grenier \cite{Gr2} defined the formula (6.1) and proved the following result.

\begin{theorem}
If $f\in A(\G_n)$, then $\mathfrak L_n f\in A(\G_{n-1}).$ Thus
$\mathfrak L_n$ is a linear mapping of $A(\G_n)$ into $A(\G_{n-1})$.
Moreover if $f\in A_0(\G_n)$ is a cusp form, then $\mathfrak L_n f=0.$ In general, $\ker \mathfrak L_n\neq A_0(\G_n).$
\end{theorem}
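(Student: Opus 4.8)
The plan is to read $\mathfrak L_n f$ off the Fourier expansion (5.10) of $f$, reducing the whole computation to its constant term $a_0(v,W)$, and then to verify the three axioms (AF1)--(AF3) for the resulting function on $\mathfrak P_{n-1}$. Every nonconstant summand in (5.10) (those with $0\neq m\in\BZ^{n-1}$) carries a factor $K_{n-1}\!\left(\hat s\,\big|\,v^{1/(n-1)}\,{}^t\g W\g,\ \pi^2 v\,m\,{}^tm\right)$ whose $Y^{-1}$-argument $\pi^2 v\,m\,{}^tm$ grows linearly in $v$; by the standard exponential decay of the matrix $K$-Bessel function (5.9) in that argument, each such term --- even after multiplication by the polynomial factor $v^{(n-1)/2}$ and by $v^{-s_1-\xi_1}$ --- tends to $0$ as $v\to\infty$. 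Hence
\begin{equation*}
  \mathfrak L_n f(W)=\lim_{v\to\infty}v^{-s_1-\xi_1}\,a_0(v,W),
\end{equation*}
and everything is transferred to the constant term.

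Next I would pin down $a_0(v,W)$ from the eigenvalue equations. Because $a_0$ is independent of $x$, the term $v^{n/(n-1)}W[\partial/\partial x]$ of the Laplacian annihilates it, so $a_0$ satisfies the reduced equation
\begin{equation*}
  \frac{n-1}{n}\,v^2\,\partial_v^2 a_0-\frac1n\,\partial_v a_0+\Delta_{n-1}a_0=\la\,a_0.
\end{equation*}
Separating variables, the $v$-dependence is a combination of powers $v^{\alpha}$ whose exponents are the characteristic roots of this equation; substituting $a_0\sim v^{\alpha}g(W)$ makes $g$ an eigenfunction of $\Delta_{n-1}$. The growth condition (AF3) forces every such $\alpha$ to have $\mathrm{Re}\,\alpha\le s_1+\xi_1$, with $\alpha=s_1+\xi_1$ occurring, so $a_0=c\,v^{s_1+\xi_1}g(W)+o\!\left(v^{s_1+\xi_1}\right)$ and the limit exists and equals $c\,g$. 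Running the same rank-lowering reduction on each Selberg generator $D_k$ of $\BD(\CP)$ (Theorem 2.2) and invoking Maass's principle that the action on power functions determines an invariant operator (Theorem 2.3) shows that $g$ is in fact a joint eigenfunction of the entire algebra $\BD(\mathfrak P_{n-1})$, with parameter $s'=(s_2,\dots,s_{n-1})$; this is (AF1).

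For (AF2) I would embed $\g\in\G_{n-1}$ into $\G_n$ as $\mathrm{diag}(1,\g)$; a short computation with (5.2) shows that under $Y\mapsto gY\,{}^tg$ this fixes $v$ and sends $W\mapsto\g W\,{}^t\g$ (it also sends $x\mapsto\g x$, which is immaterial since the limit no longer depends on $x$), so the $\G_n$-invariance of $f$ passes to the limit as $\mathfrak L_n f(\g W\,{}^t\g)=\mathfrak L_n f(W)$. Condition (AF3) is inherited at leading order: in the Iwasawa coordinates the upper-left determinants $\det Y_j$ factor through $v$ and the minors of $W$, so $v^{-s_1-\xi_1}p_{-s}(Y)$ is asymptotic to $p_{-s'}(W)$ with $s'=(s_2,\dots,s_{n-1})$, and the bound $|f(Y)|\le C|p_{-s}(Y)|$ degenerates to $|\mathfrak L_n f(W)|\le C'|p_{-s'}(W)|$. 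With linearity of the limit this proves $\mathfrak L_n:A(\G_n)\lrt A(\G_{n-1})$ is a well-defined linear map. For the cusp-form assertion, the $j=1$ case of the cuspidality condition is precisely the integral that extracts $a_0(v,W)$, so $f\in A_0(\G_n)$ forces $a_0\equiv0$, whence $\mathfrak L_n f=0$. Finally, by the displayed form of $a_0$ one has $\ker\mathfrak L_n=\{\,f:c=0\,\}$, which is strictly weaker than $a_0\equiv0$; a noncuspidal witness --- for instance a maximal-parabolic Eisenstein series induced from a cusp form on the Levi factor $\G_{n-1}$, whose constant term involves only exponents strictly below $s_1+\xi_1$ --- lies in $\ker\mathfrak L_n\setminus A_0(\G_n)$ and shows the inclusion is strict.

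The main obstacle is the eigenfunction step (AF1): one must show not merely that the extracted coefficient $g$ is a $\Delta_{n-1}$-eigenfunction, but that it is a joint eigenfunction of the \emph{entire} commutative algebra $\BD(\mathfrak P_{n-1})$, with the correct parameter $(s_2,\dots,s_{n-1})$. This requires tracking how each Selberg operator $D_k$ degenerates under the rank-lowering limit $v\to\infty$ and matching the resulting eigenvalues, which is the computational core of Grenier's argument; the construction of an explicit noncuspidal element of $\ker\mathfrak L_n$ for the final clause is the secondary difficulty.
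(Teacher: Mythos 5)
Your proposal takes essentially the same route as the paper's (Grenier's) proof: both use the Fourier expansion and the decay of the $K$-Bessel terms to reduce $\mathfrak L_n f$ to the constant term $a_0(v,W)$, then verify (AF1) via the splitting of each invariant operator into a $v$-part plus a $GL(n-1,\BR)$-invariant part on $x$-independent functions, (AF2) via the $\G_{n-1}$-invariance of $a_0$, and (AF3) by bounding the integral defining $a_0$ by $C\,|v^{s_1+\xi_1}|\,|p_{-s'}(W)|$. Your handling of the cusp-form clause (the $j=1$ cuspidality condition is exactly $a_0\equiv 0$) and of $\ker \mathfrak L_n\neq A_0(\G_n)$ (only one maximal parabolic is seen by $\mathfrak L_n$) also matches the paper's reduction and its Remark 6.1, so the proposal is correct and essentially identical in approach.
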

\noindent
{\it Proof.} The detailed proof can be found in \cite[Theorem 2, pp.\,472-473]{Gr2}.
For the convenience of the reader, we sketch Grenier's proof. We consider the Fourier expansion (5.9)
of $f$. Using the properties of the $K$-Bessel functions (cf. Formula (5.8)) and the Selberg power functions, Grenier showed that as $v\lrt \infty$,
\begin{equation*}
  f(Y)=f([v,x,W]) \thicksim a_0 (v,W).
\end{equation*}
Therefore it suffices to show that
$$\mathfrak L_n f (W):=\lim_{v\lrt \infty} v^{-s_1-\xi_1} a_0 (v,W)$$
satisfies the properties (AF1),\,(AF2) and (AF3).
\vskip 2mm\noindent
(1) Since $a_0 (v,W)$ contains no $x=\,^t(x_1,\cdots,x_{n-1})$ terms, each $D_j$ reduces to
$D_j=D_{j,v} + D_{j,W}\, (1\leq j\leq n)$, where $D_{j,v}$ operates on $v$ alone and $D_{j,W}$
is the corresponding $GL(n-1,\BR)$-invariant differential operator on $\mathfrak P_{n-1}$. Thus
$\mathfrak L_n f (W)$ is a joint eigenfunction of $\BD (\mathfrak P_{n-1})$.
\vskip 2mm\noindent
(2) It is easily seen that $a_N (v,W[\gamma])=a_{\gamma N}(v,W)$ for all $\gamma\in \G_{n-1}$
(see Formula (5.7)).
Taking $N=0$, we get $a_0 (v,W[\gamma])=a_0 (v,W)$ for all $\gamma\in \G_{n-1}.$ Thus
$\mathfrak L_n f (W[\gamma])=\mathfrak L_n f (W)$ for all $\gamma\in \G_{n-1}.$
\vskip 2mm\noindent
(3) There exist a constant $C>0$ and $s\in \BC^{n-1}$ with $s=(s_1,\cdots,c_{n-1})$ such that
$| f(Y)|\leq C |p_{-s}(Y)|$ as $\det Y_j \lrt \infty\ (1\leq j\leq n)$. Since
\begin{equation*}
  a_0 (v,W)=\int_{0}^{1}\int_{0}^{1}\cdots \int_{0}^{1} f([v,x,W])dx,
\end{equation*}
we get
\begin{eqnarray*}
  |a_0 (v,W)| &\leq& \int_{0}^{1}\int_{0}^{1}\cdots \int_{0}^{1} |f([v,x,W])|\,dx \\
   &\leq& C\, \int_{0}^{1}\int_{0}^{1}\cdots \int_{0}^{1} |p_{-s}([v,x,W])|\,dx\\
   &=& C\, \int_{0}^{1}\int_{0}^{1}\cdots \int_{0}^{1} |v^{s_1+\xi_1}|\,|p_{-s'}(W)|\,dx \\
   &=& C\,|v^{s_1+\xi_1}|\,|p_{-s'}(W)|,
\end{eqnarray*}
where $s'=(s_2,\cdots,s_{n-1})$. Hence $|\mathfrak L_n f (W)|\leq C\,|p_{-s'}(W)|$.
\hfill $\square$

\begin{remark}
The reason that $\ker \mathfrak L_n\neq A_0(\G_n)$ in general is that $\mathfrak L_n$ is
only one of several such operators associated with the various maximal parabolic subgroups.
\end{remark}

\vskip 3mm
For any $m,n\in \BZ^+$ with $m<n$, we define
\begin{equation*}
  \xi_{m,n}:\G_m\lrt \G_n
\end{equation*}
by
\begin{equation*}
  \xi_{m,n}(\gamma):=
  \begin{pmatrix}
    \gamma & 0 \\
    0 & I_{n-m}
  \end{pmatrix},\qquad \gamma\in \G_m.
\end{equation*}
We let
\begin{equation*}
  \G_\infty:=\varinjlim_n \G_n
\end{equation*}
be the inductive limit of the directed system $(\G_n,\xi_{m,n}).$

\vskip 3mm

\begin{definition}
A collection $(f_n)_{n\geq 1}$ is said to be a {\sf stable\
automorphic form} for $\G_\infty$ if it satisfies the following conditions (6.2) and (6.3)\,:
\begin{equation}
  f_n\in A(\G_n),\quad n\geq 1
\end{equation}
and
\begin{equation}
  \mathfrak L_{n+1}f_{n+1}=f_n,\quad n\geq 1.
\end{equation}
\end{definition}

\vskip 3mm
Let
\begin{equation*}
  {\mathbb A}_\infty=A(\G_\infty):=\varprojlim_n A(\G_n)
\end{equation*}
be the inverse limit of the directed system $(A(\G_n),\mathfrak L_n)$, that is,
the space of all stable automorphic forms for $\G_\infty$.

\vskip 5mm
We propose the following problems.
\vskip 3mm\noindent
{\bf Problem 6.1.} Discuss the injectivity, the surjectivity and the bijectivity of $\mathfrak L_n.$
\vskip 2mm\noindent
{\bf Problem 6.2.} Give examples of stable automorphic forms for
$\G_\infty$.
\vskip 2mm\noindent
{\bf Problem 6.3.} Investigate the structure of ${\mathbb A}_\infty$.
\begin{remark}
In the classical case of Siegel modular forms, Freitag \cite{F1} showed that the
ring structure of stable Siegel modular forms corresponding similarly to $\mathbb A_\infty$
is the polynomial ring in the theta series associated to irreducible, positive definite, unimodular even
quadratic forms.
\end{remark}

\vskip 5mm
We give an example of stable automorphic forms for $\G_\infty$.

\begin{theorem}
  Let $\{\alpha_n \}$ be the sequence such that if $\alpha_n=(s_1,s_2,\cdots,s_{n-1})\in \BC^{n-1}$, then
  $\alpha_{n-1}=(s_2,s_3,\cdots,s_{n-1})\in \BC^{n-2}$ for any $n\in \BZ^+$. Then $( E_n(\alpha_n,Y))$ is a stable automorphic form for $\G_\infty$. Here $E_n(\alpha_n,Y)$ is the Selberg Eisenstein series defined by Formula (5.5).
\end{theorem}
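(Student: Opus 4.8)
The plan is to reduce the assertion, via Definition 6.2, to two facts: that each $E_n(\alpha_n,\cdot)$ lies in $A(\G_n)$, and that $\mathfrak L_{n+1}E_{n+1}(\alpha_{n+1},\cdot)=E_n(\alpha_n,\cdot)$ for every $n$. The first is exactly what is recalled in Section 5: $E_n(\alpha_n,Y)$ converges for $\mathrm{Re}(s_j)>1$, continues meromorphically in its parameters, is a joint eigenfunction of $\BD(\CP)$, is $\G_n$-invariant by construction, and is dominated near the cusp by its leading term $p_{-\alpha_n}(Y)$, so that (AF1)--(AF3) hold. Hence everything reduces to the compatibility relation, which I would prove by a direct evaluation of the Grenier limit.

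The computation rests on the elementary factorization
\begin{equation*}
 p_{-\alpha_{n+1}}\!\left(\begin{pmatrix} v^{-1} & 0\\ 0 & v^{1/n}W'\end{pmatrix}\right)=v^{\,s_1+\xi_1}\,p_{-\alpha_n}(W'),\qquad W'\in\mathfrak P_n,\ v>0,
\end{equation*}
where $\alpha_{n+1}=(s_1,\dots,s_n)$ and $\xi_1=\frac1n\sum_{k=2}^{n}(n+1-k)s_k$ as in Definition 6.1. I would verify this by computing the upper-left corner determinants of the block-diagonal matrix in $p_{-\alpha_{n+1}}(Z)=\prod_{j=1}^{n}(\det Z_j)^{-s_j}$; the resulting $v$-exponent is exactly $s_1+\frac1n\sum_{j=2}^{n}(n+1-j)s_j=s_1+\xi_1$, so the normalizing factor $v^{-s_1-\xi_1}$ built into $\mathfrak L_{n+1}$ is precisely what cancels this growth.

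Next I would organize the sum defining $E_{n+1}(\alpha_{n+1},Y)$ by the first column $\gamma e_1$. Writing $Y=[v,x,W]$ as in (5.2) and $\gamma e_1={}^t(a,{}^tb)$ with $a\in\BZ,\ b\in\BZ^{(n,1)}$, one finds
\begin{equation*}
 \det(Y[\gamma])_1=(Y[\gamma])_{11}=v^{-1}(a+{}^txb)^2+v^{1/n}\,W[b],
\end{equation*}
the very quantity appearing in condition (F1). If $b=0$ then $a=\pm1$ by primitivity of $\gamma e_1$ and this corner equals $v^{-1}$; if $b\neq0$ then $W[b]>0$ and the corner grows like $v^{1/n}$. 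The cosets with $b=0$ are exactly those represented in the maximal parabolic $P_1=\{\,\gamma\in\G_{n+1}\mid \gamma e_1=\pm e_1\,\}$, whose members are $\begin{pmatrix}\pm1 & {}^tu\\ 0 & g\end{pmatrix}$ with $g\in\G_n$. Since $\G_\star\subset P_1$ and the top row ${}^tu$ is absorbed on passing to $P_1/\G_\star$, one has $P_1/\G_\star\cong\G_n/\G_\star^{(n)}$ via $\gamma\mapsto g$ (here $\G_\star^{(n)}$ is the upper-triangular subgroup of $\G_n$, under which $p_{-\alpha_n}$ is right-invariant because the relevant diagonal entries are $\pm1$, by Proposition 2.1). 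For $\gamma\in P_1$ the data $x,u$ enter only in $v$-powers below the leading one, so the factorization gives $\lim_{v\to\infty}v^{-s_1-\xi_1}p_{-\alpha_{n+1}}(Y[\gamma])=p_{-\alpha_n}(W[g])$, and summing over $P_1/\G_\star\cong\G_n/\G_\star^{(n)}$ reproduces $E_n(\alpha_n,W)$.

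The remaining step, which I expect to be the main obstacle, is to show that the cosets with $b\neq0$ contribute nothing to the limit. Equivalently, by Theorem 6.1 one has $\mathfrak L_{n+1}E_{n+1}(\alpha_{n+1},\cdot)(W)=\lim_{v\to\infty}v^{-s_1-\xi_1}a_0(v,W)$ with $a_0(v,W)=\int_{(\BR/\BZ)^{n}}E_{n+1}(\alpha_{n+1},[v,x,W])\,dx$ the constant term along the unipotent radical of $P_1$, and the point is that the leading $v$-asymptotics of $a_0$ is exactly the $P_1$-orbit contribution $v^{\,s_1+\xi_1}E_n(\alpha_n,W)$. The standard theory of constant terms expresses $a_0(v,W)$ as a finite sum of terms $v^{\lambda_w}c_w(\alpha_{n+1})E_n(\,\cdot\,,W)$ indexed by coset representatives $w$ of Weyl groups; for $\mathrm{Re}(s_j)>1$ the trivial representative yields the strictly largest exponent $\lambda=s_1+\xi_1$, every other exponent being smaller by a positive combination of roots, so that after multiplication by $v^{-s_1-\xi_1}$ and letting $v\to\infty$ only the trivial term survives. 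The interchange of limit and summation is legitimate on $\mathrm{Re}(s_j)>1$ by the absolute, locally uniform convergence there, and the identity then propagates to all admissible $\alpha_{n+1}$ by meromorphic continuation in $s_1,\dots,s_n$. Thus the entire argument is bookkeeping controlled by the factorization lemma, save for the strict subdominance of the non-trivial Weyl terms, which is the one genuinely delicate point.
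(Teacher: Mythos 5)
Your proposal is correct, and its skeleton coincides with the paper's own proof: the paper likewise reduces the theorem to membership $E_n(\alpha_n,\cdot)\in A(\G_n)$ (quoted from Section 5) plus the compatibility $\mathfrak L_n E_n(\alpha_n,\cdot)=E_{n-1}(\alpha_{n-1},\cdot)$, splits the coset sum according to whether the lower part of the first column of $\gamma$ vanishes, and uses the same corner-determinant factorization (Proposition 2.1) to show that the parabolic cosets contribute exactly $v^{s_1+\xi_1}E_{n-1}(\alpha_{n-1},W)$. The divergence is at the step you single out as the main obstacle. The paper never invokes constant-term theory; it pushes the Proposition 2.1 computation one step further and obtains, for \emph{every} coset (in the paper's notation $\gamma=\begin{pmatrix} a & {}^tb\\ c & D\end{pmatrix}$, so its $c$ is your $b$),
\begin{equation*}
p_{-\alpha_n}(Y[\gamma])\;=\;v^{s_1+\xi_1}\,(v\alpha)^{-s_1+s_3+2s_4+\cdots+(n-3)s_{n-1}}\;
p_{-\alpha_{n-1}}\!\left(W[(a+{}^txc)D-c({}^tb+{}^txD)]\right),
\end{equation*}
where $v\alpha=(a+{}^txc)^2+v^{n/(n-1)}W[c]$ and, crucially, the last factor is independent of $v$. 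For $c\neq 0$ one has $v\alpha\to\infty$, so once $s$ is chosen to make the displayed exponent have negative real part, every nonparabolic term tends to $0$ individually; since $v\alpha$ is increasing in $v$, these terms are dominated by their values at a fixed $v_0$, so the interchange of limit and (absolutely convergent) sum follows from dominated convergence---a justification the paper actually glosses over, asserting only that the terms vanish. Thus your appeal to the Langlands constant-term expansion and Weyl-exponent dominance is sound but substantially heavier than necessary: the subdominance you want is an explicit, elementary computation in this setting. What your route buys is a structurally cleaner treatment of the limit-sum interchange (the constant term being a finite sum of exponents) and an explicit proposal (meromorphic continuation) for parameters outside $\mathrm{Re}(s_j)>1$, a point on which the paper, like your elementary steps, works only under the hypothesis that the exponent is negative and says nothing further. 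Conversely, the paper's route is self-contained and produces the exact exponent whose negativity your root-theoretic argument would only assert abstractly.
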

\noindent
{\it Proof.} We prove the above theorem using the Grenier's result \cite[p.\,472]{Gr2}.
Let
\begin{equation*}
  Y=[v,x,W]= \begin{pmatrix}
       v^{-1} & 0 \\
       0 & v^{1/(n-1)}W
      \end{pmatrix}
       \left[ \begin{pmatrix}
                1 & ^tx \\
                0 & I_{n-1}
              \end{pmatrix} \right].
\end{equation*}
Let $s=(s_1,\cdots,s_{n-1})\in \BC^{n-1}.$
For any $\gamma=\begin{pmatrix}
       a & ^tb \\
       c & D
      \end{pmatrix}\in \G_n$ with $a\in\BZ,\ b,c\in \BZ^{(n-1,1)}$ and $D\in \BZ^{(n-1,n-1)},$
we have
\begin{equation*}
  Y[\gamma]=\,^t\gamma Y \gamma=\begin{pmatrix}
       \alpha & q \\
       ^tq & R
      \end{pmatrix},
\end{equation*}
where
\begin{eqnarray*}
  \alpha &=& v^{-1}(a+\, ^tcx)^2 + v^{1/(n-1)} W[c], \\
  q &=& v^{-1}(a+\, ^tcx)(\, ^tb+\, ^txD)+ v^{1/(n-1)}\, ^tcWD,  \\
  R &=& v^{-1}(b+\, ^tDx)(\, ^tb+\, ^txD) + v^{1/(n-1)} W[D].
\end{eqnarray*}
We observe that
\begin{equation*}
  Y[\gamma]= \begin{pmatrix}
       \alpha & 0 \\
       0 & R-\alpha^{-1}\, ^tq q
      \end{pmatrix}
       \left[ \begin{pmatrix}
                1 & \alpha^{-1}q \\
                0 & I_{n-1}
              \end{pmatrix} \right].
\end{equation*}
Let $\alpha_n=(s_1,\cdots,s_{n-1})\in \BC^{n-1}.$
Set $\xi_1=\frac{1}{n-1} \sum_{k=2}^{n-1} (n-k) s_k.$ By Proposition 2.1, we see that
\begin{equation*}
  p_{-\alpha_n} (Y[\gamma])=\alpha^{-(s_1+\xi_1)} p_{-\alpha_{n-1}}((v\alpha)^{\frac{2-n}{n-1}}\, Y_*),
\end{equation*}
where $\alpha_{n-1}=(s_2,\cdots,s_{n-1})\in \BC^{n-2}$ and
\begin{equation*}
Y_*=W[(a+ {^tx}c)D-c(\,^tb+\,^txD)].
\end{equation*}
It is easily seen that
\begin{equation*}
p_{-\alpha_n}(Y[\gamma])=v^{s_1+\xi_1} (v\alpha)^{-s_1+s_3+2s_4+\cdots+(n-3)s_{n-1}} p_{-\alpha_{n-1}}(Y_*).
\end{equation*}
We note that
\begin{equation*}
v\alpha=(a+\,^txc)^2 + v^{n/(n-1)}W[c].
\end{equation*}
Thus we have
\begin{eqnarray*}
  E_n (\alpha_n,Y) &=& \sum_{\gamma\in \G_n/\G_{\infty} } p_{-\alpha_n} (Y[\gamma]) \\
   &=& v^{s_1+\xi_1} \sum_{\gamma\in \G_n/\G_{\star} }
       (v\alpha)^{-s_1+s_3+2s_4+\cdots+(n-3)s_{n-1}} p_{-\alpha_{n-1}} (Y_*)\\
   &=& v^{s_1+\xi_1} \sum_{\gamma\in \G_n/\G_{\star} }
       \left\{ (a+\, ^tcx)^2 + v^{n/(n-1)} W[c] \right\}^{-s_1+s_3+2s_4+\cdots+(n-3)s_{n-1}}  \\
   & & \ \ \times\, p_{-\alpha_{n-1}}(W[(a+ {^tx}c)D-c(\,^tb+\,^txD)]),
\end{eqnarray*}
where $\G_{\star}$ is a subgroup of $\G_n$ consisting of all upper triangular matrices.
As $v\lrt \infty,$ if $s$ is chosen to make the exponent negative, all the terms with $c\neq 0$
approach to zero. If $c=0$, then $\gamma=\begin{pmatrix} a & ^tb \\ 0 & D\end{pmatrix}\in \G_n$ and so $a=\pm 1.$ Thus as $v\lrt \infty,$ we have
\begin{equation*}
E_n (\alpha_n,Y)\, \thicksim\, v^{s_1+\xi_1} \sum_{\gamma\in \G_{n-1}/\G_{\diamond} } p_{-\alpha_{n-1}}(W[\gamma])= v^{s_1+\xi_1} E_{n-1}(\alpha_{n-1},W),
\end{equation*}
where $\G_{\diamond}$ is a subgroup of $\G_{n-1}$ consisting of all upper triangular matrices.
Therefore
\begin{equation*}
\mathfrak L_n E_n (\alpha_n,Y)=\lim_{v\lrt\infty} v^{-(s_1+\xi_1)} E_n (\alpha_n,Y)=E_{n-1}(\alpha_{n-1},W).
\end{equation*}
Hence $( E_n(\alpha_n,Y))$ is a stable automorphic form for $\G_\infty$.
\hfill $\square$

\vskip 5mm
Let
\begin{equation*}
  G_n=SL(n,\BR),\quad K_n=SO(n,\BR)\quad {\rm and}\quad \G_n=GL(n,\BZ)/\{ \pm I_n\}.
\end{equation*}
We observe that $\G_n=SL(n,\BZ)/\{ \pm I_n\}$ if $n$ is even, and
$\G_n=SL(n,\BZ)$ if $n$ is odd.

\vskip 5mm
Let
\begin{equation}
  \mathfrak X_n:=\G_n\backslash \SP=\G_n\backslash G_n/ K_n
\end{equation}
be the moduli space of ${\sf special}$ principally polarized real tori of dimension $n$.

\vskip 5mm
For any $m,n\in \BZ^+,$ we define
\begin{equation*}
  \xi_{m,n}:G_m\lrt G_n
\end{equation*}
by
\begin{equation}
  \xi_{m,n}(A):=
  \begin{pmatrix}
    A & 0 \\
    0 & I_{n-m}
  \end{pmatrix},\qquad A\in G_m.
\end{equation}
We let
\begin{equation*}
  G_\infty:=\varinjlim_n G_n,\qquad K_\infty:=\varinjlim_n K_n
  \quad {\rm and}\quad \G_\infty:=\varinjlim_n \G_n
\end{equation*}
be the inductive limits of the directed systems $(G_n,\xi_{m,n}),
\ (K_n,\xi_{m,n})$ and $(\G_n,\xi_{m,n})$ respectively.

\vskip 3mm
We recall  the Jacobian locus $J_n$ (resp.\, the hyperelliptic locus ${\rm Hyp}_n$) in the Siegel modular variety $\mathcal A_n$ (see section 4).
We define
\begin{equation*}
 {\mathscr J}_{n,J}:=\left\{  Y\in \SP\,\vert\ \mathfrak A_Y\
 {\rm is\ the\ Jacobian\ of\ a\ curve}\ {\rm of\ genus}\ n,
 \ i.e., \ [\mathfrak A_Y]\in J_n \,   \right\}
\end{equation*}
and
\begin{equation*}
 {\mathscr J}_{n,H}:=\left\{  Y\in \SP\,\vert\ \mathfrak A_Y\
 {\rm is\ the\ Jacobian\ of\ a\ hyperelliptic\ curve}\
 {\rm of\ genus}\ n\,  \right\}.
\end{equation*}
See Example 3.1 for the definition of $\mathfrak A_Y.$
We see that $\G_n$ acts on both ${\mathscr J}_{n,J}$ and ${\mathscr J}_{n,H}$ properly discontinuously. So we may define
\begin{equation*}
  \mathfrak X_{n,J}:=\G_n\backslash {\mathscr J}_{n,J}\qquad {\rm and} \qquad
  \mathfrak X_{n,H}:=\G_n\backslash {\mathscr J}_{n,H}.
\end{equation*}
$\mathfrak X_{n,J}$ and $\mathfrak X_{n,H}$ are defined over the real numbers. $\mathfrak X_{n,J}$ and $\mathfrak X_{n,H}$ are called the
${\sf Jacobian\ real\ locus}$ and the ${\sf hyperelliptic\ real\ locus}$ respectively.

\vskip 5mm\noindent
{\bf Problem\,6.1.} Characterize the Jacobian real locus $\mathfrak X_{n,J}$. This problem may be the real version of the Schottky problem.

\vskip 3mm
Let $\mathfrak X_n^{S}$ be the Satake compactification of
$\mathfrak X_n.$ We denote by $\mathfrak X_{n,J}^{S}$ (resp.\,
$\mathfrak X_{n,H}^{S}$) the Satake compactification of
$\mathfrak X_{n,J}$ (resp.\,$\mathfrak X_{n,H}$). We can show that
$\mathfrak X_{n,J}^{S}$ (resp.\,$\mathfrak X_{n,H}^{S}$) is the closure of $\mathfrak X_{n,J}$ (resp.\,$\mathfrak X_{n,H}$) inside
$\mathfrak X_n^{S}$. We have the following sequences

\begin{equation*}
  \mathfrak X_1^{S}\lrt \mathfrak X_2^{S}\lrt
  \mathfrak X_3^{S}\lrt \cdots,
\end{equation*}

\begin{equation*}
  \mathfrak X_{1,J}^{S}\lrt \mathfrak X_{2,J}^{S}\lrt
  \mathfrak X_{3,J}^{S}\lrt \cdots
\end{equation*}
and
\begin{equation*}
  \mathfrak X_{1,H}^{S}\lrt \mathfrak X_{2,H}^{S}\lrt
  \mathfrak X_{3,H}^{S}\lrt \cdots.
\end{equation*}
\vskip 3mm
As far as the author knows, nobody proved or disproved so far that the Satake compactifications
$\mathfrak X_j^{S}\,(j\geq 1)$ are
projective and also normal. We propose the following problems.
\vskip 3mm\noindent
{\bf Problem\,6.2.} Are $\mathfrak X_j^{S}\,(j= 1,2,3,\cdots)$ projective and normal ?
\vskip 3mm\noindent
{\bf Problem\,6.3.} Discuss the injectivity and the surjectivity of the maps
$\mathfrak X_j^{S}\lrt \mathfrak X_{j+1}^{S}\,(j=1,2,3,\cdots)$.

\begin{remark}
For the classical Satake compactification
${\mathcal A}_n^S:=\overline{\G_n\backslash \BH_n}\,(n\geq 1)$, Baily \cite{1B} proved that
each ${\mathcal A}_n^S\,(n\geq 1)$ is a projective and normal subvariety.
For the more detailed discussion of this subject, we refer to Freitag's book
\cite[pp.\,111--124]{F2}.
\end{remark}

\vskip 3mm\noindent
We put
\begin{equation*}
 \mathfrak X_{\infty}^{S}:=\varinjlim_n \mathfrak X_n^S,\qquad
 \mathfrak X_{\infty,J}^{S}:=\varinjlim_n \mathfrak X_{n,J}^S\quad  {\rm and}\quad
 \mathfrak X_{\infty,H}^{S}:=\varinjlim_n \mathfrak X_{n,H}^S.
\end{equation*}

\vskip 3mm \noindent
For any two positive integers $m,n\in\BZ^+$ with $m<n$, we embed
$\mathfrak P_m$ into $\mathfrak P_n $ as follows:
\begin{equation*}
 \psi_{m,n}:\mathfrak P_m \lrt \mathfrak P_n, \qquad
 Y \mapsto \begin{pmatrix}
              Y & 0 \\
              0 & I_{n-m}
            \end{pmatrix},\qquad Y\in X_m.
\end{equation*}
We let
\begin{equation*}
  \mathfrak P_\infty= \varinjlim_n \mathfrak P_n
\end{equation*}
be the inductive limit of the directed system $(\mathfrak P_n,\psi_{m,n}).$
We can show that
\begin{equation*}
  \mathfrak P_\infty= G_\infty/ K_\infty.
\end{equation*}

\vskip 3mm
Now we have the Grenier operator
\begin{equation*}
  \mathfrak L_n: A(\G_n)\lrt A(\G_{n-1})
\end{equation*}
defined by the formula (6.1).

\begin{definition}
An automorphic form $f\in A(\G_n)$ is said to be a ${\sf Grenier\!-\!Schottky}$
${\sf automorphic\ form}$ for the Jacobian real locus (resp.\,the hyperelliptic real locus)
if it vanishes along
$\mathfrak X_{n,J}$ (resp.\,$\mathfrak X_{n,H}$).
A collection $(f_n)_{n\geq 1}$ is called a ${\sf stable\ Grenier\!-\!Schottky}$
${\sf automorphic\ form}$ for the Jacobian real locus (resp.\,the hyperelliptic real locus)
if it satisfies the following conditions
{\rm (SGS1)} and {\rm (SGS2)}\,:
\vskip 2mm
{\rm (SGS1)} \ \ $f_n$ is a Grenier-Schottky automorphic form
for the Jacobian real locus \\
\indent\indent \ \ \ \ \ \ \ \ \,(resp.\,the hyperelliptic real locus)
for each $n\geq 1.$
\vskip 2mm
{\rm (SGS2)}\ \  $\mathfrak L_n f_n=f_{n-1}$ for all $n> 1.$
\end{definition}

\vskip 3mm
The following natural question arises\,:
\vskip 2mm\noindent
{\bf Question 6.1.} Are there stable Grenier-Schottky automorphic forms for the Jacobian real locus (resp.\,the hyperelliptic real locus) ?

\begin{remark}
In the classical case for the Jacobian locus, Codogni and Shepherd-Barron \cite{CS} showed that
there do not exist stable Schottky-Siegel modular forms. In the classical case for the hyperelliptic
locus, Codogni \cite{Cod} showed that there exist nontrivial stable Schottky-Siegel modular forms.
\end{remark}

\end{section}

\vskip 1cm


\begin{thebibliography}{99}

\bibitem{1B} W.\,L. Baily, {\em Satake's compactification of $V_n$},
Amer. J. of Math. {\bf 80} (1958), 348--364.

\bibitem{B} D. Bump, {\em Automorphic Forms for $GL(3,\BR)$}, Lecture Notes in Math., vol.\,{\bf 1083}, Springer-Verlag, Berlin-Heidelberg-New York (1984).

\bibitem{Cod} G. Codogni, {\em Hyperelliptic Schottky problem and stable modular forms}, Doc. Math. {\bf 21} (2016), 445--466.

\bibitem{CS} G. Codogni and N.~I. Shepherd-Barron, {\em The non-existence of stable Schokky forms}, Compos. Math. {\bf 150} (2014), no.~4, 679--690.

\bibitem{C} H. Comessati, {\em Sulle variet{\`a} abeliane reali} I,\,II, Ann. Mat. Pura. Appl.
{\bf 2} (1924), 67-106 and {\bf 4} (1926), 27--72.

\bibitem{F1} E. Freitag, {\em Stabile Modulformen}, Math. Ann. {\bf 230} (1977), 197--211.

\bibitem{F2} E. Freitag, {\em Siegelsche Modulfunktionen}, Springer (1983).


\bibitem{G} D. Goldfeld, \textit{Automorphic Forms and $L$-Functions for the Group $GL(n,\BR)$,}
Cambrige University Press (2006).

\bibitem{GoT} M. Goresky and Y. S. Tai,
{\em The Moduli Space of Real Abelian Varieties with Level Structure},
Compositio Math. {\bf 139} (2003), 1--27.

\bibitem{Gr1} D. Grenier, {\em Fundamental Domains for the general linear group}, Pacific J. of
Math. {\bf 132} (1993), 293--317.

\bibitem{Gr2} D. Grenier, {\em An analogue of Siegel's $\phi$-operator for automorphic forms for $GL(n,\BZ)$}, Trans. Amer. Math. Soc. {\bf 331}, No. 1 (1992), 463-477.

\bibitem{Gr3} D. Grenier, {\em On the shape of fundamental domains on $GL(n,\BR)/O(n)$}, Pacific J. of
Math. Vol.\,{\bf 160}, No.\,1 (1993), 53-66.




\bibitem{IT} K. Imai and A. Terras, \textit{Fourier expansions of Eisenstein series for $GL(3,\BZ)$}, Trans. Amer. Math. Soc. {\bf 273} (1982), 679-694.

\bibitem{M1} H. Maass, \textit{Die Bestimmung der Dirichletreihnen mit Gr{\"o}ssencharakteren
zu den Modulformen $n$-ten Grades,} J. Indian Math. Soc., vol.\,{\bf 19} (1955), 1-23.

\bibitem{M2} H. Maass, \textit{Siegel modular forms and Dirichlet
series,} Lecture Notes in Math., vol.\,{\bf 216}, Springer-Verlag,
Berlin-Heidelberg-New York (1971).

\bibitem{Mi} H. Minkowski, \textit{Gesammelte Abhandlungen,}
Chelsea, New York (1967).

\bibitem{Mum} D. Mumford, \textit{Abelian Varieties}, Oxford
University Press (1970): Reprinted (1985).

\bibitem{S1} A. Selberg, \textit{Harmonic analysis and
discontinuous groups in weakly symmetric Riemannian spaces with
applications to Dirichlet series,} J. Indian Math. Soc. B.  {\bf
20} (1956), 47-87\,; Collected Papers, Volume I, Springer-Verlag (1989), 423-463.

\bibitem{S2} A. Selberg, \textit{Discontinuous groups and harmonic analysis,} Proceedings of ICM, Stockholm (1962), 177-189\,; Collected Papers, Volume I, Springer-Verlag (1989), 493-505.

\bibitem{S-S} M. Sepp{\"a}l{\"a} and R. Silhol,
{\em Moduli Spaces for Real Algebraic Curves and Real Abelian varieties},
Math. Z. {\bf 201} (1989), 151--165.

\bibitem{Sie} C. L. Siegel, \textit{The volume of the fundamental domain for some infinite groups,}
Trans. Amer. Math. Soc. {\bf 30} (1936), 209-218.


\bibitem{Si1} R. Silhol, {\em Real Abelian varieties and the theory of Comessatti},
Math. Z. {\bf 181} (1982), 345--362.

\bibitem{Si2} R. Silhol, {\em Real Algebraic Surfaces,} Lecture Notes in Math. {\bf 1392}, Springer-Verlag,
Berlin-Heidelberg-New York (1989).

\bibitem{Si3} R. Silhol, {\em Compactifications of moduli spaces in real algebraic geometry},
Invent. Math. {\bf 107} (1992), 151--202.

\bibitem{T} A. Terras, \textit{Harmonic Analysis on
Symmetric Spaces and Applications II,} Springer-Verlag (1988).

\bibitem{Y3} J.-H. Yang, \textit{Polarized Real Tori,} J. Korean Math. Soc. {\bf 52} (2015), No.~2, pp. 269-331.




\end{thebibliography}
\end{document}